\documentclass{article}
\usepackage{amsmath}
\usepackage{amsthm}
\usepackage{amsfonts}
\usepackage{latexsym}
\usepackage{amssymb}
\usepackage{enumerate, url}
\newtheorem{thrm}{Theorem}
\newtheorem{lem}[thrm]{Lemma}
\newtheorem{cor}[thrm]{Corollary}

\newtheorem{claim}[thrm]{Claim}

\newcommand{\NS}{\mathrm{NS}}
\newcommand{\pmax}{\mathbb{P}_{\mathrm{max}}}
\newcommand{\bbP}{\mathbb{P}}
\newcommand{\breals}{\omega^{\omega}}
\newcommand{\AD}{\mathsf{AD}}

\newcommand{\ZFC}{\mathsf{ZFC}}
\newcommand{\MA}{\mathsf{MA}}
\newcommand{\cP}{\mathcal{P}}
\newcommand{\bbH}{\mathbb{H}}

\newcommand{\bbR}{\mathbb{R}}
\newcommand{\cof}{\mathrm{cof}}

\newtheoremstyle{hdefinition}%
  {\topsep}%
  {\topsep}%
  {\upshape}
  {}%
  {\bfseries}%
  {.}
  { }%
  {\thmnumber{#2 }\thmname{#1}\thmnote{ \rm(#3)}}%

\newtheoremstyle{hclaim}%
  {\topsep}%
  {\topsep}%
  {\itshape}%
  {}%
  {\bfseries}%
  {.}
  { }%
  {\thmname{#1}\thmnote{ \rm#3}}%

\newtheoremstyle{hnotation}%
  {\topsep}%
  {\topsep}%
  {\upshape}%
  {}%
  {\bfseries}%
  {.}
  { }%
  {\thmname{#1}\thmnote{ \rm#3}}%

\theoremstyle{hclaim}
\newtheorem*{claim*}{Claim}

\theoremstyle{hdefinition}
\newtheorem{df}[thrm]{Definition}

\newtheorem{remark}[thrm]{Remark}
\newtheorem{ques}[thrm]{Question}

\synctex = 1

\begin{document}

\title{Strongly increasing sequences}

\author{Chris Lambie-Hanson\thanks{The first author was partially supported by GA\v{C}R project 23-04683S and the Academy of Sciences of the Czech Republic (RVO 67985840).} \and Paul B. Larson\thanks{The research of the second author was partially supported by NSF grants DMS-1201494, DMS-1764320 and DMS-2452139. The main results in this paper were proved in 2015.}}

\maketitle

\begin{abstract}
Using a variation of Woodin's $\pmax$ forcing, we force over a model of the Axiom of Determinacy to produce a model of ZFC containing a very strongly increasing sequence of length $\omega_{2}$ consisting of functions from $\omega$ to $\omega$. We also show that there can be no such sequence of length $\omega_{4}$. 
\end{abstract}

\section{Introduction}

Given functions $f, g$ from $\omega$ to the ordinals, a relation $R$ in $\{ <, >, \leq, \geq, =\}$ and $n \in \omega$, we write
\begin{itemize}
\item $f R _{n} g$ to mean that $f(m) R g(m)$ for all $m \in \omega \setminus n$;
\item $f R^{*} g$ to mean that $\{ n \in \omega : \neg( f(n) R g(n))\}$ is finite.
\end{itemize}

This paper concerns wellordered sequences of functions from the integers to the ordinals, which are increasing in the following senses. 

\begin{df}\label{sidef} Given a ordinals $\eta$ and $\gamma$, we say that a sequence $\langle f_{\alpha} : \alpha < \gamma \rangle$ of functions from $\omega$ to $\eta$ is \emph{strongly increasing} if
\begin{enumerate}
\item for all $\alpha < \beta < \gamma$, $f_\alpha <^* f_\beta$
\item\label{sitwo} for each limit ordinal $\beta < \gamma$, there exist a club $C_\beta \subseteq \beta$
and an $n_\beta < \omega$ such that, for all $\alpha$ in $C_\beta$,
$f_\alpha <_{n_\beta} f_\beta$.
\end{enumerate}
\end{df}

\begin{df}\label{vsidef} We say that a strongly increasing sequence $\langle f_{\alpha} : \alpha < \gamma \rangle$ is \emph{very strongly increasing} if
\begin{enumerate}
\item $n_{\beta}$ in part (\ref{sitwo}) of Definition \ref{sidef} can be chosen to be $0$ for each limit ordinal $\beta$ of uncountable cofinality;
\item\label{vsitwo} for each ordinal of the form $\beta + \omega < \gamma$, and each $n \in \omega$, there is an $\alpha$ in the interval
$[\beta + 1, \beta + \omega)$ such that $f_{\alpha} \leq_{n} f_{\beta + \omega}$ and $f_{\alpha}(m) = 0$ for all $m < n$.
\end{enumerate}
\end{df}

The ordinal $\gamma$ is called the \emph{length} of $\langle f_{\alpha} : \alpha < \gamma \rangle$, and a sequence is said to have \emph{successor length} if its length is a successor ordinal.

\begin{remark}\label{ctworem} Condition (\ref{vsitwo}) of Definition \ref{vsidef} is designed to make condition (\ref{sitwo}) of Definition \ref{sidef} hold automatically when $\beta$ has countable cofinality. In addition, it ensures that 
whenever $\bar{f} = \langle f_{\alpha} : \alpha < \gamma \rangle$ is very strongly increasing and $\gamma$ has countable cofinality and is not of the form $\beta + \omega$, there exists a function $f_{\gamma}$ such that $\langle f_{\alpha} : \alpha \leq \gamma\rangle$ is very strongly increasing;  moreover, $f_{\gamma}$ can be any function $f$ such that $f_{\alpha} <^{*} f$ for all $\alpha < \gamma$. Extending a very strongly increasing sequence $\langle f_{\alpha} : \alpha \leq \gamma\rangle$ to one of the form $\langle f_{\alpha} : \alpha \leq \gamma + \omega\rangle$ can be done by letting $f_{\gamma + \omega}(n)$ be $f_{\gamma}(n) + n$ for each $n \in \omega$, and letting $f_{\gamma + k}(n)$(for all $k,n \in \omega$ with $k$ positive) be $0$ if $n\leq k$ and $f_{\gamma}(n) + k$ otherwise.
\end{remark}

\begin{remark}\label{ctworem2}
If $\langle f_{\alpha} :\alpha < \gamma \rangle$ is a 
strongly increasing sequence, then there is a sequence $\langle f'_{\alpha} :\alpha < \gamma \rangle$ satisfying condition (\ref{vsitwo}) of Definition \ref{vsidef} such that $f_{\alpha} =^{*} f'_{\alpha}$ for all $\alpha  < \gamma$. 
\end{remark} 



Our interest in strongly increasing sequences is partially motivated by two longstanding open 
problems:
\begin{enumerate}
  \item Is the Chang's Conjecture variant $(\aleph_{\omega+1}, \aleph_\omega) \twoheadrightarrow 
  (\aleph_2, \aleph_1)$ consistent?
  \item Is there consistently a pair of inner models $V \subseteq W$ of $\mathsf{ZFC}$ such 
  that $(\aleph_{\omega+1})^V = (\aleph_2)^W$.
\end{enumerate}

In Section~\ref{motivation_section}, we review some of what is known about these questions and 
show that a positive answer to either would entail the consistency of the existence of 
an ordinal $\eta < \omega_2$ and a strongly increasing 
sequence of length $\omega_2$ consisting of functions from $\omega$ to $\eta$. We also show that 
there can exist no such sequence of length $\omega_n$ for any $4 \leq n < \omega$. 

In Section~\ref{pmax_section}, we prove the main result of our paper (Theorem \ref{mainthrmdetail}), using a variant of 
Woodin's $\pmax$ forcing to establish the 
consistency of the existence of a strongly increasing sequence of length $\omega_2$ consisting 
of functions from $\omega$ to $\omega$. The following is a simplified version of the theorem. 

\begin{thrm}
Suppose that $V =L(A, \mathbb{R})$, for some $A \subseteq \breals$, and that $\AD^{+}$ holds. Then there is a forcing extension in which ZFC holds and there exists a very strongly increasing $\omega_{2}$-sequence of functions from $\omega$ to $\omega$. 
\end{thrm}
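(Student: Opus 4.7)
The plan is to adapt Woodin's $\pmax$ forcing. I would work in $L(A,\bbR)$ under $\AD^+$ and define a forcing $\bbP$ whose conditions are (essentially) pairs $(M, \bar{f})$ where $M$ is a countable transitive model satisfying a suitable fragment of $\ZFC$ together with the requisite iterability hypothesis, and $\bar{f} \in M$ is a very strongly increasing sequence of functions from $\omega$ to $\omega$ of $M$-length $(\omega_{2})^{M}$. The order is the standard $\pmax$-style comparison via iterations: $(M_{0}, \bar{f}_{0}) \leq (M_{1}, \bar{f}_{1})$ iff there is an iteration $j : M_{1} \to M_{1}^{*}$ (in the appropriate iterability sense) with $M_{1}^{*}, j \in M_{0}$ and $j(\bar{f}_{1})$ equal to the initial segment of $\bar{f}_{0}$ of length $j((\omega_{2})^{M_{1}})$. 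The $\AD^+$ hypothesis, together with the inner-model structure of $L(A,\bbR)$, is used to produce countable iterable models carrying such sequences in abundance.

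After establishing the standard package of $\pmax$-style features---$\sigma$-closure, homogeneity (so that $V[G] \models \ZFC$ and the reals of $V[G]$ agree with those of $V$), and density arguments showing that $(\omega_{1})^{V}$ and $(\omega_{2})^{V}$ remain cardinals in $V[G]$---I would assemble the generic object from $G$. The sequence $\bar{f}_{G} = \langle f_{\alpha} : \alpha < (\omega_{2})^{V[G]} \rangle$ is defined by $f_{\alpha} = j_{p}(\bar{f}_{p})(\beta)$ for any $p \in G$ with iteration $j_{p}$ satisfying $j_{p}(\beta) = \alpha$ for some $\beta < (\omega_{2})^{M_{p}}$; coherence of this definition follows from the ordering on $\bbP$.

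The core work is to verify that $\bar{f}_{G}$ is very strongly increasing in $V[G]$. Successor stages and countable-cofinality limits are automatic, using Remark \ref{ctworem} together with the fact that every condition's internal sequence already satisfies clause (\ref{vsitwo}) of Definition \ref{vsidef}. The interesting case is a limit $\beta < (\omega_{2})^{V[G]}$ of cofinality $(\omega_{1})^{V[G]}$, where one must produce a club $C_{\beta} \subseteq \beta$ and verify $f_{\alpha} <_{0} f_{\beta}$ for $\alpha \in C_{\beta}$. I would handle this by a density argument: given any $p \in \bbP$, find $q \leq p$ inside whose model $\beta$ appears as a limit of $M_{q}$-cofinality $(\omega_{1})^{M_{q}}$; since $\bar{f}_{q}$ is very strongly increasing in $M_{q}$, the required club exists internally, and its image under the generic iteration $j_{q}$ yields the desired witness in $V[G]$.

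The principal obstacle I anticipate is developing the iteration theory for $\bbP$---identifying an appropriate notion of iterability, available in $L(A,\bbR)$ under $\AD^+$, for countable models carrying very strongly increasing $(\omega_{2})^{M}$-length sequences, and showing that such models exist densely enough for the cardinal-preservation and length-density arguments to go through. A closely related difficulty is calibrating $\bbP$ so that the generic object attains length exactly $(\omega_{2})^{V[G]}$ without collapsing $(\omega_{2})^{V}$: unlike standard $\pmax$, whose generic object lives at $\aleph_{1}$, the present forcing must produce an object of size $\aleph_{2}$, which forces the conditions to be enriched beyond the usual $\pmax$ template and requires a delicate balance between density of extensions and preservation of cardinals.
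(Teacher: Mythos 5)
Your general shape is right---a $\pmax$ variation whose conditions carry very strongly increasing sequences---but the proposal assumes away the one genuinely new difficulty, and your final paragraph only gestures at it. The crux is not verifying that the assembled generic sequence is very strongly increasing (that is the easy one-line elementarity argument, essentially what the paper does in Theorem \ref{mainthrmdetail}); it is constructing lower bounds for descending $\omega_1$-sequences of conditions, which the standard $\pmax$ machinery needs in order to get $\ZFC$ (a wellordering of $\cP(\omega_1)$), preservation of $\omega_2$, and the capture of every subset of $\omega_1$ by the generic filter. At such a limit one has a very strongly increasing sequence $F^*$ whose length has cofinality $\omega_1$, and to place it inside a new countable iterable model as (an initial segment of) a condition one must ``cap it off'': by clause (1) of Definition \ref{vsidef} this requires a single $f$ with $f_\alpha <_0 f$ --- \emph{everywhere} domination, not mod-finite --- for club-many $\alpha$, while simultaneously preserving the stationarity of every stationary set from every model $M^*_\alpha$ along the chain (otherwise the resulting model cannot be correct about $\NS_{\omega_1}$ relative to the earlier conditions). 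An $\aleph_1$-sized $\leq^*$-unbounded family generally admits no such $f$, and naively shooting a club through $\{\beta : g \geq_0 f^*_{\eta_\beta}\}$ for a dominating $g$ can destroy exactly the stationary sets one must keep (see Remark \ref{spitrem}). The paper's Lemma \ref{newlem} is built to solve precisely this: conditions carry \emph{successor-length} sequences; the $\omega_1$-chain is built as a game in which player $II$ uses a bookkeeping assignment of disjoint stationary sets $B_{\sigma,A}$ to pairs (finite string $\sigma$, stationary set $A$ from some model) and arranges that at stage $\beta \in B_{\sigma,A}$ the new top function begins with $\sigma$ and $\beta$ enters the image of $A$; then Hechler forcing adds a dominating $g$ such that for every such $A$ the set of $\beta$ in its image with $g \geq_0 f^*_{\eta_\beta}$ is stationary (a condition $(\sigma,P)$ can be met inside $B_{\sigma,A}$), so the subsequent club-shooting preserves all the required stationary sets. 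None of this appears in your proposal, and your ``density argument'' for cofinality-$\omega_1$ limits presupposes that conditions realizing $\beta$ internally exist below every $p$, which is exactly what this lemma provides.

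Two smaller calibration issues. First, your conditions omit the anchoring set $a \subseteq \omega_1^M$ (and the $\MA_{\aleph_1}$ requirement on $M$); without these the iteration witnessing the order need not be unique, and the coherence of $\bar f_G$ does not ``follow from the ordering'' --- the paper needs $\MA_{\aleph_1}$ both for uniqueness of iterations and to guarantee $\mathfrak d > \aleph_1$ inside conditions so that countable-in-$V$ pieces of sequence can be $<^*$-bounded. Second, requiring $\bar f$ to have length $\omega_2^M$ rather than successor length does not avoid the capping problem; it merely relocates it, since $j(\omega_2^{M_1})$ sits inside $\bar f_0$ as a limit point and the problem recurs at every $\omega_1$-limit of the chain of conditions. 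The paper's choice of successor-length sequences is what lets the capping be performed explicitly, once per limit stage, by the Hechler-plus-club-shooting argument.
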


The axiom $\AD^{+}$ is a strengthening of the Axiom of Determinacy due to Woodin (see \cite{PLextensions}). The theorem implies the weaker version where $A= \emptyset$ and $\AD^{+}$ is replaced with $\AD$. 
The partial order used in the proof is a variation of Woodin's $\pmax$ forcing \cite{Woodin}. The use of the hypothesis AD + $V = L(\mathbb{R})$ and the fact that ZFC holds in the corresponding extension are both part of the standard $\pmax$ machinery. 



\section{Chang's Conjecture and strongly increasing sequences} \label{motivation_section}

In this section we motivate our result by linking it to some well-known
questions about variants of Chang's Conjecture and collapsing successors of singular
cardinals. None of the results in this section is essentially new,\footnote{See, e.g., \cite{Shelah}, \cite{Abraham_Magidor}, or \cite{Sharon_Viale} for related results in slightly different contexts.} but we include them for completeness. Some of the results have not been stated before in the precise forms we need.

We begin by noting the following consequence of Remark \ref{ctworem}, which can be proved by induction on $\beta$. 

\begin{lem}\label{extendlem} If $F$ is a very strongly increasing sequence in $\breals$, $\gamma$ and $\beta$ are ordinals with $\beta < \omega_{2}$ and $\gamma + 1$ is the length of $F$, then there is a very strongly increasing sequence in $\breals$ of length $\gamma + \beta$ extending $F$.
\end{lem}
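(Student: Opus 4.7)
The proof proceeds by transfinite induction on $\beta$. Base cases $\beta = 0, 1$ are immediate. For $\beta$ a limit ordinal, the extension is built as a single continuous transfinite recursion of length $\gamma + \beta$, with the successor step invoked at each stage. In the successor case $\beta = \delta + 1$, the induction hypothesis yields an extension of length $\gamma + \delta$, and the task reduces to choosing a single function $f_{\gamma + \delta}$ maintaining the very strongly increasing structure.

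The choice of $f_{\gamma + \delta}$ splits by the ordinal type of $\gamma + \delta$. If it is a successor or zero, any function pointwise above $f_{\gamma + \delta - 1}$ works. If it is a limit of countable cofinality not of the form $\eta + \omega$, Remark~\ref{ctworem} applies directly. If $\gamma + \delta = \eta + \omega$, the explicit formula from Remark~\ref{ctworem} specifies $f_{\gamma + \delta}$ along with the intervening $f_{\eta + k}$ for $0 < k < \omega$, and automatically verifies condition~(\ref{vsitwo}) at $\gamma + \delta$.

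The main obstacle is the case where $\gamma + \delta$ is a limit of uncountable cofinality---necessarily $\omega_1$, since $\gamma + \delta < \omega_2$. Here $f_{\gamma + \delta}$ must be a $<^*$-upper bound of the preceding sequence and $<_0$-above $f_\alpha$ on a club $C \subseteq \gamma + \delta$. The plan is to fix a continuous cofinal sequence $\langle \alpha_\xi : \xi < \omega_1 \rangle$ in $\gamma + \delta$ and to arrange, during the earlier steps of the recursion, that for each $n \in \omega$ the map $\xi \mapsto f_{\alpha_\xi}(n)$ is bounded on a club $D_n \subseteq \omega_1$ by some $B_n < \omega$; this should be achievable by exploiting the freedom granted by Remark~\ref{ctworem} at intermediate countable-cofinality stages. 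Then $D = \bigcap_n D_n$ is a club, and setting $f_{\gamma + \delta}(n) = B_n + 1$ yields the required function: on $\{\alpha_\xi : \xi \in D\}$, which is a club in $\gamma + \delta$, one has $f_{\alpha_\xi} <_0 f_{\gamma + \delta}$; and by cofinality of this club, every $f_\beta$ with $\beta < \gamma + \delta$ is $<^*$-dominated by some $f_{\alpha_\xi}$ with $\xi \in D$, hence by $f_{\gamma + \delta}$. The hard part is precisely arranging that each $\xi \mapsto f_{\alpha_\xi}(n)$ be bounded on a club of $\omega_1$---this drives the combinatorial content of the construction.
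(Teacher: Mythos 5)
The paper gives no argument for this lemma beyond the remark that it ``can be proved by induction on $\beta$'' as a consequence of Remark~\ref{ctworem}, and your case analysis at successor stages, at stages of the form $\eta+\omega$, and at limits of countable cofinality is exactly what that remark supplies; those cases are fine. The genuine gap is the case you yourself flag as the crux: a new index $\delta$ of cofinality $\omega_1$, where Definition~\ref{vsidef} demands a club $C\subseteq\delta$ with $f_\alpha <_0 f_\delta$ for all $\alpha\in C$, i.e.\ a single $f_\delta\in\breals$ pointwise above club-many predecessors. You reduce this to arranging that each coordinate map $\xi\mapsto f_{\alpha_\xi}(n)$ be bounded on a club of $\omega_1$, and then write that ``this should be achievable.'' That is precisely the one point of the lemma with real content, and it cannot be waved through: a plain induction on $\beta$ does not suffice, because the inductive hypothesis only hands you \emph{some} very strongly increasing sequence of length $\gamma+\delta$, and such a sequence need not admit any top element. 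For instance, under CH one can enumerate $\breals$ as $\{g_\eta : \eta<\omega_1\}$, partition the (club) set of limits of limit ordinals into stationary pieces $S_\eta$, and at each stage $\alpha\in S_\eta$ use the freedom of Remark~\ref{ctworem} to choose an upper bound with $f_\alpha(0)\ge g_\eta(0)$; the resulting very strongly increasing $\omega_1$-sequence has the property that $\{\alpha : f_\alpha <_0 g\}$ contains no club for any $g$. So the induction hypothesis must be strengthened, and the strengthening is where the work lives.

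One way to close the gap is to prove, by induction on $\beta<\omega_2$, that for every very strongly increasing $F=\langle f_\alpha:\alpha\le\gamma\rangle$ and every $h\in\breals$ with $h(n)-f_\gamma(n)\to\infty$, there is a very strongly increasing extension of length $\gamma+\beta+1$ whose new top $f'$ satisfies $f'<_0 h$ and $h(n)-f'(n)\to\infty$, with club-many of the new members pointwise below $h$ when $\cof(\gamma+\beta)=\omega_1$. The device that makes the $\omega_1$-step work is a $\subseteq^*$-decreasing sequence $\langle A_\xi:\xi<\omega_1\rangle$ of infinite, coinfinite subsets of $\omega$ with each $A_\xi\setminus A_{\xi+1}$ infinite (such a sequence exists in ZFC): the functions $g_\xi(n)=f_\gamma(n)+|n\setminus A_\xi|$ are $<^*$-increasing, all lie pointwise below $f_\gamma(n)+n+1$, and satisfy $g_{\xi+1}(n)-g_\xi(n)\to\infty$, so the growing gaps between consecutive $g_\xi$'s leave room to interpolate the countable blocks (including the $\eta+\omega$ blocks required by condition~(\ref{vsitwo}) of Definition~\ref{vsidef}) using the inductive hypothesis. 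Taking the $\alpha_\xi$'s to be limits of limit ordinals, Remark~\ref{ctworem} lets you use $g_\xi$ itself as $f_{\alpha_\xi}$, and your intersection-of-clubs argument then finishes. Without some such device, what you have is a correct reduction rather than a proof.
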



In particular, there is always a strongly increasing sequence in $\breals$ of length $\omega_1$.
We next show that instances of Chang's Conjecture at singular cardinals of countable cofinality entail the existence of
more interesting strongly increasing sequences. Recall that the Chang's Conjecture principle 
\[(\kappa, \lambda) \twoheadrightarrow (\delta, \gamma)\] says that whenever $M$ is a structure of cardinality $\kappa$ over some countable language, and $B$ is a subset of $M$ of cardinality $\lambda$, then $M$ has an elementary substructure $X$ of cardinality $\delta$ such that $|X \cap B| = \gamma$. We use equivalent formulations of these principles below. 

\begin{lem} \label{cc_lem}
  Suppose that $\mu$ is a singular cardinal of countable cofinality,
  $\kappa < \mu$ is a cardinal, and $(\mu^+, \mu) \twoheadrightarrow (\kappa^+, \kappa)$
  holds. Then there exist $\eta < \kappa^+$ and a strongly increasing sequence
  $\langle f_\alpha : \alpha < \kappa^+ \rangle$ of functions from $\omega$ to $\eta$.
\end{lem}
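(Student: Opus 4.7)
The plan is to apply the Chang's conjecture hypothesis $(\mu^+,\mu)\twoheadrightarrow(\kappa^+,\kappa)$ to obtain an elementary submodel $X\prec H(\theta)$ with $|X\cap\mu^+|=\kappa^+$ and $|X\cap\mu|=\kappa$, and then to produce the desired sequence by pushing a scale in $\prod_n \mu_n$ forward through the Mostowski collapse of $X$.

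First I would fix an increasing sequence $\langle \mu_n : n < \omega\rangle$ of regular cardinals cofinal in $\mu$ with $\mu_0>\kappa$, together with a scale $\vec g = \langle g_\xi : \xi < \mu^+\rangle$ on $\prod_n\mu_n$ modulo finite, which exists by Shelah's PCF theorem. Applying the Chang's conjecture hypothesis to the structure $(H(\theta),\in,\mu,\mu^+,\vec\mu,\vec g,\ldots)$ enriched with countably many Skolem functions yields $X$ as above. By standard manipulations (closing under Skolem operations and passing to an initial segment) I may further assume that $X\cap\mu^+$ is closed in its supremum and has order type $\kappa^+$; let $\langle\beta_\alpha:\alpha<\kappa^+\rangle$ be its continuous enumeration. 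Let $\pi:X\to\bar X$ be the Mostowski collapse and set $\eta:=\pi(\mu)$, an ordinal of cardinality $|X\cap\mu|=\kappa$ and so below $\kappa^+$. For each $\alpha<\kappa^+$, define $f_\alpha := \pi(g_{\beta_\alpha})$; since $\pi$ preserves order on ordinals and $g_{\beta_\alpha}(n)<\mu_n$, each $f_\alpha$ is a function from $\omega$ to $\eta$, so the bounding ordinal $\eta<\kappa^+$ is as required.

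Condition (1) of Definition~\ref{sidef} is then immediate: for $\alpha < \alpha' < \kappa^+$ the scale property gives some $N$ with $g_{\beta_\alpha}(n)<g_{\beta_{\alpha'}}(n)$ for $n\ge N$, and order-preservation of $\pi$ on $\mathrm{Ord}\cap X$ transfers this inequality to $f_\alpha(n)<f_{\alpha'}(n)$ for the same $n$.

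The main obstacle is condition (2): for each limit $\lambda<\kappa^+$, producing a club $C_\lambda\subseteq\lambda$ and $n_\lambda<\omega$ with $f_\alpha<_{n_\lambda}f_\lambda$ for $\alpha\in C_\lambda$. Pulled back through $\pi$, this is equivalent to $\beta_\lambda$ being a \emph{good point} of the scale $\vec g$ with a witnessing club meeting $\{\beta_\alpha : \alpha<\lambda\}$ along the image of a club of $\lambda$. For $\lambda$ of countable cofinality this is automatic after a finite modification (cf.\ Remark~\ref{ctworem2}). For uncountable cofinalities I would invoke Shelah's theorem that for each regular $\theta$ with $\omega<\theta<\mu$ the set of good points of $\vec g$ of cofinality $\theta$ is stationary in $\mu^+$; working inside $X$ by elementarity, together with continuity of the enumeration $\alpha\mapsto\beta_\alpha$, this yields that $\{\lambda<\kappa^+ : \beta_\lambda\text{ is good}\}$ contains a club of $\kappa^+$. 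Thinning the sequence along this club and reindexing by its increasing enumeration produces the desired strongly increasing $\kappa^+$-sequence of functions $\omega\to\eta$.
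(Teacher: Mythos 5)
Your overall architecture (transfer a sequence in $\prod_n \mu_n$ through the collapse of a Chang submodel) matches the paper's, but the way you secure condition (2) of Definition~\ref{sidef} has genuine gaps. First, you cannot "by standard manipulations" assume that $X\cap\mu^+$ is closed in its supremum: Chang's Conjecture hands you a single submodel with the prescribed trace cardinalities, and closing it under suprema or additional Skolem functions has no reason to preserve $|X\cap\mu|=\kappa$; nothing in the hypothesis lets you demand any form of internal approachability. The paper avoids this entirely: it never asks $N\cap\mu^+$ to be closed, and instead proves (Claim~\ref{closed_claim}) that the pointwise collapse of a club $C_\delta$ that is an \emph{element} of $N$ is closed in its supremum. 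That claim is the step doing the real work at limit points, and it is absent from your argument.

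Second, starting from an \emph{arbitrary} scale and invoking ``stationarily many good points'' does not deliver condition (2) at every limit $\lambda<\kappa^+$. Stationarity of the good points of each cofinality is far weaker than the good points containing a club; in general the non-good points of a fixed uncountable cofinality can themselves be stationary, so the inference that $\{\lambda<\kappa^+ : \beta_\lambda \text{ is good}\}$ contains a club is a non sequitur, and thinning cannot repair it at the limit points you discard. Moreover, even at a good point $\delta$, goodness only produces an exact upper bound $h$ for $\vec g\restriction\delta$ with the right cofinalities; it does not say that the scale function $g_\delta$ itself dominates club-many predecessors everywhere beyond a fixed $n$, which is what condition (2) requires of $f_\lambda$. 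The paper sidesteps both problems by \emph{constructing} $\vec g$ recursively so that \emph{every} limit $\beta<\mu^+$ comes equipped with a club $C_\beta$ and an $n_\beta$ (the least $i$ with $\mu_i>\cof(\beta)$, which exists since $\cof(\beta)<\mu$) witnessing $g_\alpha<_{n_\beta}g_\beta$ on $C_\beta$, e.g.\ by taking pointwise suprema along $C_\beta$ above $n_\beta$ at each limit stage. With such a $\vec g$, elementarity places a witnessing club inside $N$, and Claim~\ref{closed_claim} finishes the job. I recommend replacing the arbitrary scale by this recursive construction and dropping the closure assumption on $X\cap\mu^+$.
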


\begin{proof}
  Let $\langle \mu_i : i < \omega \rangle$ be an increasing sequence of regular cardinals,
  cofinal in $\mu$. 
It is straightforward to recursively construct a strongly  increasing sequence $\vec{g} = \langle g_\beta : \beta < \mu^+ \rangle$ of functions
  in $\prod_{i < \omega} \mu_i$, letting $n_{\beta}$ be the least $i$ such that $\mu_{i} > \cof(\beta)$, for each limit ordinal $\beta$. Let $\theta$ be a sufficiently large regular cardinal,
  and use $(\mu^+, \mu) \twoheadrightarrow (\kappa^+, \kappa)$ to find an elementary
  substructure $N \prec (H(\theta), \in, \mu^+, \mu, \vec{g})$ such that
  $|N \cap \mu^+| = \kappa^+$ and $|N \cap \mu| = \kappa$. Since $N$ contains a surjection from $\mu$ to each element of $\mu^{+} \cap N$, 
$\mathrm{otp}(N \cap \mu^+) = \kappa^+$.

  By elementarity and the fact that $\vec{g}$ is strongly increasing,
  we know that, for every limit ordinal $\delta \in N \cap \mu^+$,
  there exist a club $C_\delta \subseteq \delta$ and an $n_\delta < \omega$ such
  that $C_\delta \in N$ and, for all $\beta \in C_\delta$, we have $g_\beta <_{n_\delta} g_\delta$.
  Let $M$ be the transitive collapse of $N$, and let $\pi:N \rightarrow M$ be the collapse
  map.

  \begin{claim} \label{closed_claim}
    Suppose that $\delta \in N \cap \mu^+$ is a limit ordinal. Then $\pi(C_\delta)$
    is closed in its supremum.
  \end{claim}

  \begin{proof}
    Let $\bar{\beta} < \pi(\delta)$ be such that $\sup(\pi(C_\delta) \cap \bar{\beta}) = \bar{\beta}$.
    Let $\beta = \pi^{-1}(\bar{\beta})$, and note that $\beta$ is a limit ordinal.
    If $\alpha < \beta$, then $\pi(C_\delta) \cap (\pi(\alpha), \bar{\beta}) \neq \emptyset$,
    so $C_\delta \cap (\alpha, \beta) \neq \emptyset$. It follows that
    $N \models ``C_\delta \cap \beta \mbox{ is unbounded in } \beta,"$ so, by elementarity,
    $C_\delta$ is in fact unbounded in $\beta$ and, since $C_\delta$ is closed in $\delta$,
    we have $\beta \in C_\delta$. But then $\bar{\beta} \in \pi(C_\delta)$, as desired.
  \end{proof}

Since $\omega \subseteq N$, it follows that for all $\beta \in N \cap \mu^+$, we have
  $\mathrm{range}(g_\beta) \subseteq N$. Let $\langle \beta_\alpha : \alpha < \kappa^+ \rangle$
  enumerate $M \cap \mu^+$ in increasing order. Define a sequence $\vec{f} = \langle f_\alpha :
  \alpha < \kappa^+ \rangle$ of functions from $\omega$ to $\pi(\mu)$ by letting
  $f_\alpha(n) = \pi(g_{\beta_\alpha}(n))$ for all $\alpha < \kappa^+$ and all $n < \omega$.
Claim~\ref{closed_claim} and the fact that $\pi(\mu) < \kappa^{+}$ imply that $\vec{f}$ is strongly
  increasing, as witnessed by $n_{\beta_\alpha}$ and $\pi(C_{\beta_\alpha})$
  for each limit ordinal $\alpha < \kappa^+$.
\end{proof}

In \cite{Levinski_Magidor_Shelah}, Levinski, Magidor, and Shelah prove,
assuming the consistency of a certain large cardinal hypothesis, the consistency
of a number of Chang's Conjecture variants involving singular cardinals, most
notably $(\aleph_{\omega + 1}, \aleph_\omega) \twoheadrightarrow (\aleph_1, \aleph_0)$.
However, in any instance $(\mu^+, \mu) \twoheadrightarrow (\kappa^+, \kappa)$
known to be consistent in which $\mu$ is a singular cardinal, we have $\mathrm{cf}(\mu) =
\mathrm{cf}(\kappa)$. This leads to the following natural folklore question.

\begin{ques} \label{cc_ques}
  Is $(\aleph_{\omega + 1}, \aleph_\omega) \twoheadrightarrow (\aleph_2, \aleph_1)$
  consistent?
\end{ques}

A closely related question, raised by Bukovsk\'y and Copl\'akov\'a-Hartov\'a in
\cite{Bukovsky}, is the following.

\begin{ques} \label{collapsing_ques}
  Is there consistently a pair of inner models $V \subseteq W$ of $\ZFC$ such that
  $(\aleph_{\omega + 1})^V = (\aleph_2)^W$?
\end{ques}

To see one connection between these two questions, observe that, if 
\[(\aleph_{\omega + 1}, \aleph_\omega) \twoheadrightarrow (\aleph_2, \aleph_1)\] and there is a Woodin cardinal,
then there is a $V$-generic filter $G$ for Woodin's stationary tower forcing such
that $(\aleph_{\omega + 1})^V = (\aleph_2)^{V[G]}$ (see \cite{PLstationary}).

Essentially the same argument as that given in the proof of Lemma~\ref{cc_lem} yields
the following result, which states that a positive answer to Question~\ref{collapsing_ques}
would entail the existence of interesting strongly increasing sequences.

\begin{lem}
  Suppose that $V \subseteq W$ are inner models of $\ZFC$, $n < \omega$, and
  $(\aleph_{\omega + 1})^V = (\aleph_{n+1})^W$. Then, in $W$, there is
  $\eta < \omega_{n+1}$ and a strongly increasing sequence $\langle f_\alpha :
  \alpha < \omega_{n+1} \rangle$ of functions from $\omega$ to $\eta$.
\end{lem}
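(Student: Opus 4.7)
The plan is to mimic the proof of Lemma~\ref{cc_lem}, but with the hypothesis $(\aleph_{\omega+1})^V = (\aleph_{n+1})^W$ playing the role formerly played by Chang's Conjecture together with a Skolem-hull-and-collapse argument. Because the inner model $W$ already sees $(\mu^+)^V$ as $\omega_{n+1}^W$, there is no need for an auxiliary elementary submodel or transitive collapse; the transfer of the sequence from $V$ to $W$ will be direct.

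Concretely, I would work first in $V$, setting $\mu := \aleph_\omega^V$, fixing an increasing sequence $\langle \mu_i : i < \omega \rangle$ of $V$-regular cardinals cofinal in $\mu$, and constructing by recursion a strongly increasing sequence $\vec{g} = \langle g_\beta : \beta < \mu^+ \rangle$ in $\prod_{i<\omega}\mu_i$, exactly as in the proof of Lemma~\ref{cc_lem}. For each limit $\beta < \mu^+$ this construction supplies a $V$-club $C_\beta \subseteq \beta$ and an integer $n_\beta$ (the least $i$ with $\mu_i > \cof^V(\beta)$) such that $g_\alpha <_{n_\beta} g_\beta$ for every $\alpha \in C_\beta$. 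I then move to $W$. By hypothesis $(\mu^+)^V = \omega_{n+1}^W$, so the ordinal $\eta := \mu$ lies strictly below $\omega_{n+1}^W$; the sequence $\vec{g}$ belongs to $V \subseteq W$, has $W$-length $\omega_{n+1}^W$, and consists of functions from $\omega$ into $\eta$.

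What remains is to verify that $\vec{g}$ is still strongly increasing from the viewpoint of $W$, and this is a routine absoluteness check: the eventual-domination relation $g_\alpha <^* g_\beta$ is absolute between transitive models, and each $C_\beta \in V \subseteq W$ remains closed and unbounded in $\beta$ from the $W$-perspective, since both closedness and unboundedness of a subset of an ordinal depend only on the order on that ordinal. The one point meriting attention is that $W$-cofinalities of the limit ordinals $\beta < \mu^+$ typically differ from their $V$-cofinalities — many cardinals below $\mu^+$ have been collapsed in $W$ — but this is harmless, because the definition of strongly increasing requires a club witness rather than a cofinal sequence of any specified ordertype, so each $V$-club $C_\beta$ survives intact. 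There is no substantive obstacle in the argument.
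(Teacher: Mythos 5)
Your proposal is correct and follows essentially the same route as the paper: construct the sequence in $V$ inside $\prod_{i<\omega}\mu_i$ as in Lemma~\ref{cc_lem}, then observe that it, together with its club witnesses, persists to $W$, where its length has become $\omega_{n+1}$ and its range is bounded by $\eta = \aleph_\omega^V < \omega_{n+1}^W$. The absoluteness points you spell out (of $<^*$, $<_n$, and of being closed and unbounded in $\beta$) are exactly what the paper's sketch leaves implicit.
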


\begin{proof}[Proof sketch]
  As in the proof of Lemma~\ref{cc_lem}, in $V$ we can construct a strongly increasing sequence 
  $\vec{f} = \langle f_\alpha : \alpha < \omega_{\omega+1} \rangle$ in $\prod_{i < \omega} 
  \omega_i$. In $W$, we have $(\omega_{\omega+1})^V = \omega_{n+1}$, and $\vec{f}$ remains a 
  strongly increasing sequence of functions. Thus, letting $\eta = (\omega_\omega)^V < 
  (\omega_{n+1})^W$, we see that $\vec{f}$ is as desired.
\end{proof}

For more information and partial progress on Questions \ref{cc_ques} and \ref{collapsing_ques},
we refer the reader to \cite{Sharon_Viale} and \cite{Cummings}.

For the next result, we need some notions from PCF theory \cite{Shelah}. The following is a special
case of a more general definition.

\begin{df}
  Suppose that $\vec{f} = \langle f_\alpha : \alpha < \gamma \rangle$ is a $<^*$-increasing
  sequence of functions from $\omega$ to the ordinals. A function $g$ from $\omega$ to the
  ordinals is an \emph{exact upper bound} (eub) for $\vec{f}$ if:
  \begin{itemize}
    \item $g$ is an \emph{upper bound}, i.e., for every $\alpha < \gamma$, we have $f_\alpha <^* g$;
    \item for every function $h$ such that $h <^* g$, there is $\alpha < \gamma$ such that
      $h <^* f_\alpha$.
  \end{itemize}
  An ordinal $\beta < \gamma$ is called \emph{good} for $\vec{f}$ if
  $\mathrm{cf}(\beta) > \omega$ and there is an eub
  $h$ for $\vec{f} \restriction \beta = \langle f_\alpha : \alpha < \beta \rangle$
  such that $\mathrm{cf}(h(i)) = \mathrm{cf}(\beta)$ for all but finitely many $i < \omega$.
\end{df}

 Recall that $S^{\lambda}_{\kappa}$ denotes the set of ordinals below $\lambda$ of cofinality $\kappa$. The following theorem, due in a much more general form to Shelah \cite{Shelah}, is a basic result in PCF theory. For a proof, we refer the reader to \cite[Theorem 10.1]{Cummings_notes}.

\begin{thrm} \label{eub_thrm}
  Suppose that $\kappa < \lambda$ are uncountable regular cardinals and that
  $\vec{f} = \langle f_\alpha : \alpha < \lambda \rangle$ is a $<^*$-increasing
  sequence of functions from $\omega$ to the ordinals. If there are stationarily
  many $\beta \in S^\lambda_\kappa$ such that $\beta$ is good for $\vec{f}$,
  then $\vec{f}$ has an eub $h$ such that $\mathrm{cf}(h(i)) > \kappa$ for all
  $i < \omega$.
\end{thrm}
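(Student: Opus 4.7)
The plan is to use the stationary set of good points to assemble a global eub $h$ for $\vec{f}$, and to read off the required cofinality condition on $h(i)$ from the construction. Let $S \subseteq S^\lambda_\kappa$ be the given stationary set of good points, and for each $\beta \in S$ fix a witness $h_\beta$, i.e.\ an eub for $\vec{f} \restriction \beta$ with $\cof(h_\beta(i)) = \kappa$ for all $i \geq n_\beta$. Since $\lambda$ is an uncountable regular cardinal and $n_\beta \in \omega$, pigeonhole yields a stationary $S' \subseteq S$ on which $n_\beta$ takes a constant value $n^*$.

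The critical step is to establish a coherence property for the family $\{h_\beta : \beta \in S'\}$. For $\beta_1 < \beta_2$ in $S'$, exactness of $h_{\beta_1}$ immediately rules out $h_{\beta_2} <^* h_{\beta_1}$: otherwise $h_{\beta_2}$ would be strictly $<^*$-below the eub $h_{\beta_1}$, hence $<^* f_\alpha$ for some $\alpha < \beta_1$, contradicting that $h_{\beta_2}$ is an upper bound for $\vec{f} \restriction \beta_2$. Strengthening this to genuine $<^*$-monotonicity of $\beta \mapsto h_\beta$ on a stationary subset of $S'$ is the technical heart of the construction; it can be carried out by replacing each $h_\beta$ with a canonical representative of its eub-equivalence class and applying pressing-down one more time.

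With coherence in place, define $h(i) = 0$ for $i < n^*$ and $h(i) = \sup\{h_\beta(i) : \beta \in S'\}$ for $i \geq n^*$. Since $S'$ is cofinal in $\lambda$, any $f_\alpha$ is dominated by some $h_\beta$ with $\beta \in S'$ and $\beta > \alpha$, so $h$ is a $<^*$-upper bound. Exactness follows from a coordinate-wise bookkeeping argument: given $g <^* h$, choose for each $i \geq n^*$ a $\beta_i \in S'$ with $g(i) < h_{\beta_i}(i)$, let $\beta^* \in S'$ be above $\sup_i \beta_i$ (using that $\cof(h(i)) > \omega$, which will follow from the construction), and conclude $g <^* h_{\beta^*}$; the eub property of $h_{\beta^*}$ then supplies $\alpha < \beta^*$ with $g <^* f_\alpha$. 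Finally, for each $i \geq n^*$, the map $\beta \mapsto h_\beta(i)$ is $<^*$-monotone and cofinal in $h(i)$, so any cofinal sequence in $h(i)$ of length $\kappa$ would pull back to a cofinal subset of $S'$ of size at most $\kappa$, impossible since $S'$ is cofinal in $\lambda > \kappa$.

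The main obstacle I anticipate is the coherence step: eubs are unique only up to a weak equivalence rather than modulo $=^*$, so extracting a genuinely $<^*$-monotone representative of the family $\{h_\beta : \beta \in S'\}$ requires passing to canonical representatives together with further pressing-down. Once coherence is available, the remaining verifications reduce to standard cofinality calculations with countably many coordinates.
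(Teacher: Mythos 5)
You should first note that the paper does not actually prove Theorem \ref{eub_thrm}: it is quoted as a special case of a theorem of Shelah and the reader is referred to \cite[Theorem 10.1]{Cummings_notes} for a proof. So your attempt is being measured against the standard PCF argument rather than anything in the paper, and as written it has two genuine gaps.

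The first is the ``coherence'' step, which you rightly identify as the heart of the matter but then dismiss with ``canonical representatives plus one more pressing-down.'' This does not work. What comes for free is only mod-finite coherence: if $\beta_1 < \beta_2$ are both good, then $h_{\beta_1} \leq^* h_{\beta_2}$ (and, contrary to your closing remark, eubs of a fixed initial segment \emph{are} unique modulo $=^*$, so non-uniqueness is not where the difficulty lies). But mod-finite coherence is useless for your construction: each $h_\beta$ may be altered arbitrarily on a finite set without ceasing to witness goodness, so the pointwise supremum $h(i)=\sup\{h_\beta(i):\beta\in S'\}$ can overshoot the true eub on \emph{every} coordinate, destroying exactness; and in your exactness verification the inference from ``$g(i)<h_{\beta_i}(i)$ for all $i$'' to ``$g<^* h_{\beta^*}$'' needs $h_{\beta_i}(i)\le h_{\beta^*}(i)$ at the specific coordinate $i$, which mod-finite domination with an exceptional set depending on the pair $(\beta_i,\beta^*)$ does not give. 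Upgrading to pointwise coherence on a uniform tail across a $\lambda$-sized family is essentially the content of the theorem, and Fodor does not apply because the exceptional set is a function of a \emph{pair} of ordinals, not a regressive function of one; the actual proof chooses the witnesses $h_\beta$ from the sequence itself (as pointwise suprema over the unbounded subsets of $\beta$ supplied by goodness) and runs a genuine uniformization argument.

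The second gap is the final cofinality computation. Even granting full pointwise coherence, a cofinal subset of $h(i)$ of size $\kappa$ pulls back to a subset of $S'$ of size at most $\kappa$ that has no reason to be cofinal in $\lambda$; if it is bounded by some $\beta^*\in S'$ you merely conclude $h(i)=h_{\beta^*}(i)$, which has cofinality exactly $\kappa$ --- consistent with everything you have arranged, but not with $\mathrm{cf}(h(i))>\kappa$. The missing ingredient is the separate argument that no eub of a $<^*$-increasing $\lambda$-sequence can have infinitely many coordinates of cofinality equal to a fixed regular $\mu$ with $\omega<\mu<\lambda$: decompose such coordinates by cofinal $\mu$-sequences, bound the resulting $\mu$-many indices below $\lambda$ by regularity, and stabilize the mod-finite bound using $\mathrm{cf}(\mu)>\omega$. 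This is precisely the argument carried out in the Claim inside the paper's proof of Theorem \ref{goodthrm}, and it is what turns ``at least $\kappa$'' into ``greater than $\kappa$.''
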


We can now prove the following result bounding the lengths of strongly increasing sequences of functions. 

\begin{thrm}\label{goodthrm}
  Suppose that $\lambda$ is an uncountable regular cardinal and $\epsilon < \lambda^{+3}$.
  Then there is no strongly increasing sequence $\vec{f} = \langle f_\alpha : \alpha < \lambda^{+3} \rangle$
  of functions from $\omega$ to $\epsilon$.
\end{thrm}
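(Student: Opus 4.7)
The plan is to derive a contradiction by combining the strong-increasingness structure of $\vec{f}$ with Theorem~\ref{eub_thrm}. Suppose, toward a contradiction, that $\vec{f} = \langle f_\alpha : \alpha < \lambda^{+3}\rangle$ is strongly increasing with $f_\alpha : \omega \to \epsilon$ and $\epsilon < \lambda^{+3}$. The strategy is to produce an eub $H$ for $\vec{f}$ with $\mathrm{cf}(H(i)) > \lambda^{++}$ for every $i<\omega$; since $\mathrm{cf}(H(i))>\lambda^{++}$ forces $H(i)\geq \lambda^{+3}>\epsilon$, the constant function $\mathbf{c}_\epsilon \equiv \epsilon$ will satisfy $\mathbf{c}_\epsilon <^* H$, and the eub property will yield $\alpha<\lambda^{+3}$ with $\mathbf{c}_\epsilon <^* f_\alpha$, contradicting the fact that $f_\alpha$ takes values below $\epsilon$.

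The first step is a simple pigeonhole: since $\beta \mapsto n_\beta$ sends $S^{\lambda^{+3}}_{\lambda^{++}}$ into $\omega$, fix a stationary $S \subseteq S^{\lambda^{+3}}_{\lambda^{++}}$ and $n^* < \omega$ with $n_\beta = n^*$ for every $\beta \in S$. For such $\beta$, the club $C_\beta \subseteq \beta$ has order type $\lambda^{++}$ and $f_\alpha(m) < f_\beta(m)$ pointwise for $\alpha \in C_\beta$ and $m \geq n^*$.

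The main step is to show that every $\beta \in S$ is good for $\vec{f}$. For $\beta \in S$, the cofinal subsequence $\langle f_\alpha : \alpha \in C_\beta\rangle$ of $\vec{f}\restriction\beta$ has length $\lambda^{++}$ and is pointwise strictly bounded by $f_\beta$ on $[n^*, \omega)$. By a standard PCF pigeonhole — for each $\alpha \in C_\beta$ of uncountable cofinality, the map $\alpha' \mapsto \max\{m \geq n^* : f_{\alpha'}(m) \geq f_\alpha(m)\}$ from the cofinal set $C_\beta \cap \alpha$ into $\omega$ is bounded on a cofinal subset, and a second pigeonhole over the cofinal set of such $\alpha$ in $C_\beta$ yields a uniform $n^{**}_\beta \geq n^*$ — one extracts a cofinal $D_\beta \subseteq C_\beta$ of order type $\lambda^{++}$ on which $\langle f_\alpha : \alpha \in D_\beta\rangle$ is pointwise strictly increasing on $[n^{**}_\beta, \omega)$. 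Setting $h_\beta(m) := \sup_{\alpha \in D_\beta} f_\alpha(m)$ for $m \geq n^{**}_\beta$ (and $0$ otherwise) gives an eub for $\vec{f}\restriction\beta$ with $\mathrm{cf}(h_\beta(m)) = \lambda^{++}$ for every $m \geq n^{**}_\beta$, witnessing goodness.

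Since stationarily many $\beta \in S^{\lambda^{+3}}_{\lambda^{++}}$ are good for $\vec{f}$, Theorem~\ref{eub_thrm} (applied with its $\lambda$ set to $\lambda^{+3}$ and its $\kappa$ set to $\lambda^{++}$) produces the desired eub $H$, and the contradiction described in the opening paragraph follows: $\mathbf{c}_\epsilon <^* H$ gives $\alpha$ with $\epsilon < f_\alpha(i)$ for cofinitely many $i$, but $f_\alpha(i) < \epsilon$ always. The main obstacle is the goodness step: producing $D_\beta$ so that the entire chain, not merely consecutive pairs, is pointwise $<$-increasing on $[n^{**}_\beta, \omega)$. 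This requires a careful iterated-pigeonhole construction in the style of Shelah's analysis of $<^*$-increasing sequences with a pointwise upper bound, and relies essentially on the uniformity $n_\beta = n^*$ inherited from the first step.
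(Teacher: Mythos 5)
Your overall architecture matches the paper's: produce stationarily many good points, invoke Theorem~\ref{eub_thrm}, and derive a contradiction from the fact that the resulting eub must exceed $\epsilon$ coordinatewise. Your endgame is even a little slicker than the paper's, since by aiming for good points of cofinality $\lambda^{++}$ you could apply Theorem~\ref{eub_thrm} with $\kappa = \lambda^{++}$ and get $\mathrm{cf}(h(i)) \geq \lambda^{+3}$ immediately, skipping the paper's separate Claim that boosts the cofinalities from $> \lambda$ to $\geq \lambda^{+3}$. But the step you yourself flag as the ``main obstacle'' --- that every $\beta \in S$ is good --- is a genuine gap, and it is exactly where all the work in the theorem lies.

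The problem is that condition (\ref{sitwo}) of Definition~\ref{sidef} only relates each $f_\alpha$ ($\alpha \in C_\beta$) to the single top function $f_\beta$: it gives $f_\alpha <_{n^*} f_\beta$, but says nothing uniform about pointwise domination \emph{between pairs} $f_\alpha, f_{\alpha'}$ with $\alpha < \alpha'$ in $C_\beta$, where you only have $f_{\alpha'} <^* f_{\alpha''}$ with no control on where the domination begins. Goodness of $\beta$ is equivalent to the existence of a cofinal $D_\beta$ and a single $n$ with the whole chain $<_n$-increasing, and your two pigeonholes do not deliver this: they give, for each $\alpha$ in a cofinal set, \emph{some} cofinal $E_\alpha \subseteq C_\beta \cap \alpha$ with a uniform error bound $k$, but these sets need not cohere into one cofinal $D_\beta$ --- any recursive attempt to thread $D_\beta$ through them fails at limit stages, where all previously chosen ordinals would have to lie in $E_{\delta}$ for the next point $\delta$, and nothing guarantees this. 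Indeed the implication ``condition (\ref{sitwo}) holds at $\beta$ $\Rightarrow$ $\beta$ is good'' is false as a general principle: given any $<^*$-increasing sequence with a non-good supremum $\beta$ of uncountable cofinality, setting $f_\beta(m) = \sup_{\alpha \in C} f_\alpha(m) + 1$ for a club $C$ satisfies condition (\ref{sitwo}) with $n_\beta = 0$ while $\beta$ remains non-good. This is precisely why the paper's proof does not argue pointwise at a single $\beta$ but instead builds a continuous sequence $\langle \alpha_\eta : \eta < \lambda^{++}\rangle$, asking a question against a club-guessing sequence at each successor step so as to \emph{manufacture} the uniform pairwise domination $f_{\alpha_{\zeta^\dagger}} <_n f_{\alpha_{\eta^\dagger}}$ on an unbounded set; even then it only obtains good points of cofinality $\lambda$, which is why the extra Claim at the end is needed. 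To repair your proof you would need to supply an argument of comparable strength; the pigeonhole as described cannot do it.
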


\begin{proof}
  Suppose for the sake of contradiction that $\vec{f} = \langle f_\alpha :
  \alpha < \lambda^{+3} \rangle$ is a strongly increasing sequence of functions
  from $\omega$ to $\epsilon$. Fix a club-guessing sequence $\langle C_\xi :
  \xi \in S^{\lambda^{++}}_\lambda \rangle$, i.e., a sequence such that each $C_\xi$ is
  a club in $\xi$ of order type $\lambda$ and, for every club $D \subseteq \lambda^{++}$,
  there is a $\xi \in S^{\lambda^{++}}_\lambda$ such that $C_\xi \subseteq D$ (see Claim 
  2.14B of section III(2) of \cite{Shelah}, for 
  instance, for a proof that this can be done).

  Our goal is to show that there are stationarily many elements of
  $S^{\lambda^{+3}}_\lambda$ that are good for $\vec{f}$.
  To this end, fix a club $C \subseteq \lambda^{+3}$.
  We now construct an increasing, continuous sequence $\langle \alpha_\eta :
  \eta < \lambda^{++} \rangle$ of elements of $C$.
  Begin by letting $\alpha_0 = \min(C)$. If $\eta < \lambda^{++}$ is a limit ordinal,
  then $\alpha_\eta = \sup\{\alpha_\zeta : \zeta < \eta\}$. Suppose
  now that $\eta < \lambda^{++}$ and $\langle \alpha_\zeta : \zeta \leq \eta \rangle$
  has been constructed. We show how to obtain $\alpha_{\eta + 1}$.

  For each $\xi \in S^{\lambda^{++}}_\lambda$, ask whether there exist
  $\beta < \lambda^{+3}$ and $n < \omega$ such that, for all $\zeta \in C_\xi \cap
  (\eta + 1)$, we have $f_{\alpha_\zeta} <_n f_\beta$. Note that, if $\beta$
  has this property, then so does every $\gamma$ with $\beta \leq \gamma < \lambda^{+3}$.
  If the answer is yes, then choose an ordinal $\alpha^\xi_\eta \in C \setminus (\alpha_\eta + 1)$
  witnessing this. If the answer is no, then let $\alpha^\xi_\eta = \min(C \setminus (\alpha_\eta + 1))$.
  Let $\alpha_{\eta + 1} = \sup\{\alpha^\xi_\eta : \xi \in S^{\lambda^{++}}_\lambda\}$.

  Let $\beta = \sup\{\alpha_\eta : \eta < \lambda^{++}\}$. By the fact that $\vec{f}$
  is strongly increasing, we can find a club $D \subseteq \beta$ and a natural
  number $n$ such that, for all $\alpha \in D$, we have $f_\alpha <_n f_\beta$.
  Let $E = \{\eta < \lambda^{++} : \alpha_\eta \in D\}$, and note that $E$ is club
  in $\lambda^{++}$. Fix $\xi \in S^{\lambda^{++}}_\lambda$ such that
  $C_\xi \subseteq E$. For $\eta \in C_\xi$, let $\eta^{\dagger}$ denote
  $\min(C_\xi \setminus (\eta + 1))$.

  Suppose $\eta \in C_\xi$. When defining $\alpha_{\eta + 1}$, the answer to the question
  asked about $C_\xi$ was ``yes,'' as witnessed by $\beta$. Therefore, $\alpha_{\eta + 1}$
  was chosen to be large enough so that, for some $n < \omega$ and all $\zeta \in C_\xi
  \cap (\eta + 1)$, we have $f_{\alpha_\zeta} <_n f_{\alpha_{\eta + 1}}$. The same obviously
  holds for all $\eta'$ in the interval $(\eta, \lambda^{++})$. In particular, there
  is a natural number $n_\eta$ such that, for all $\zeta \in C_\xi \cap (\eta + 1)$,
  we have $f_{\alpha_\zeta} <_{n_\eta} f_{\alpha_{\eta^\dagger}}$.

  Since $\lambda$ is regular and uncountable, we can fix a natural number $n$ and an unbounded
  set $A \subseteq C_\xi$ such that, for all $\eta \in A$, we have $n_\eta = n$.
  Let $B = \{\alpha_{\eta^\dagger} : \eta \in A\}$. Then $B$ is unbounded in
  $\alpha_\xi$ and, for all $\zeta < \eta$, both in $A$, we have $f_{\alpha_{\zeta^\dagger}} <_n f_{\alpha_{\eta^\dagger}}$.
  But now, if $g$ is a function from $\omega$ to the ordinals such that, for all $n \leq m < \omega$,
  we have $g(m) = \sup\{f_{\alpha_{\eta^\dagger}}(m) : \eta \in A\}$, it is easily verified that $g$
  is an eub for $\vec{f} \restriction \alpha_\xi$. But then $g$ witnesses that $\alpha_\xi$
  is good for $\vec{f}$. Moreover, by construction, $\alpha_\xi \in C$. Since $C$ was arbitrary,
  we have shown that there are stationarily many elements of $S^{\lambda^{+3}}_\lambda$
  that are good for $\vec{f}$.

  By Theorem~\ref{eub_thrm}, it follows that there is an eub $h$ for $\vec{f}$
  such that $\mathrm{cf}(h(i)) > \lambda$ for all $i < \omega$.

  \begin{claim}
    $\mathrm{cf}(h(i)) \geq \lambda^{+3}$ for all but finitely many $i < \omega$.
  \end{claim}

  \begin{proof}
    If not, then there exist $k \in \{1,2\}$ and an unbounded $A \subseteq \omega$
    such that, for all $i \in A$, we have $\mathrm{cf}(h(i)) = \lambda^{+k}$.
    For each $i \in A$, let $\{\delta^i_\eta : \eta < \lambda^{+k}\}$
    enumerate, in increasing fashion, a set of ordinals cofinal in $h(i)$.
    For each $\eta < \lambda^{+k}$, define a function $h_\eta$ from $\omega$
    to the ordinals by letting $h_\eta(i) = \delta^i_\eta$ if $i \in A$
    and $h_\eta(i) = 0$ otherwise. For each $\eta < \lambda^{+k}$,
    we have $h_\eta <^* h$, so, since $h$ is an eub for $\vec{f}$,
    there is $\beta_\eta < \lambda^{+3}$ such that $h_\eta <^* f_{\beta_\eta}$.
    Let $\gamma = \sup\{\beta_\eta : \eta < \lambda^{+k}\}$. Since $k < 3$,
    we have $\gamma < \lambda^{+3}$. Therefore, for all $\eta < \lambda^{+k}$,
    we have $h_\eta <^* f_\gamma$. Fix an unbounded $B \subseteq \lambda^{+k}$
    and an $n < \omega$ such that, for all $\eta \in B$, we have $h_\eta
    <_n f_\gamma$. But then, for all $i \in A \setminus n$,
    we must have $f_\gamma(i) \geq \sup\{\delta^i_\eta : \eta \in B\} = h(i)$,
    contradicting the fact that $h$ is an upper bound for $\vec{f}$.
  \end{proof}

  But this claim immediately contradicts the fact that $\vec{f}$ is a sequence of
  functions from $\omega$ to $\epsilon$ and $\epsilon < \lambda^{+3}$. This is
  because, by the claim, we must have $h(i) > \epsilon$ for all but finitely many
  $i < \omega$. But then the constant function, taking value $\epsilon$, witnesses
  that $h$ fails to be an eub.
\end{proof}

The results in this section lead to the following corollary. We note that clauses (2) and (3) 
of the corollary were already known via slightly different proofs; see, e.g., \cite{Cummings} 
and \cite{Sharon_Viale}.

\begin{cor}\label{no4cor}
  Suppose that $3 \leq n < \omega$.
  \begin{enumerate}
    \item If $\eta < \omega_{n+1}$, then there is no strongly increasing sequence
      $\langle f_\alpha : \alpha < \omega_{n+1} \rangle$ of functions from
      $\omega$ to $\eta$.
    \item $(\aleph_{\omega + 1}, \aleph_\omega) \not\twoheadrightarrow (\aleph_{n+1},
      \aleph_n)$.
    \item There are no inner models $V \subseteq W$ of $\ZFC$ such that $(\aleph_{\omega + 1})^V
      = (\aleph_{n+1})^W$.
  \end{enumerate}
\end{cor}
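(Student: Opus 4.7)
The plan is to deduce all three clauses directly from the results already established in this section, with clause~(1) being the main content and clauses~(2) and~(3) being formal consequences of clause~(1).

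For clause~(1), the observation is simply that, since $n \geq 3$, the cardinal $\lambda := \omega_{n-2}$ is uncountable and regular, and $\lambda^{+3} = \omega_{n+1}$. Theorem~\ref{goodthrm} applied with this $\lambda$ and with $\epsilon := \eta$ immediately rules out any strongly increasing sequence $\langle f_\alpha : \alpha < \omega_{n+1} \rangle$ of functions from $\omega$ into $\eta$.

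For clause~(2), the plan is to argue by contradiction. Assuming $(\aleph_{\omega+1}, \aleph_\omega) \twoheadrightarrow (\aleph_{n+1}, \aleph_n)$ holds, apply Lemma~\ref{cc_lem} with $\mu := \aleph_\omega$ (which is singular of countable cofinality) and $\kappa := \aleph_n$. This produces some $\eta < \aleph_{n+1}$ together with a strongly increasing sequence of length $\kappa^{+} = \omega_{n+1}$ of functions from $\omega$ to $\eta$, which directly contradicts clause~(1). For clause~(3), one argues analogously: if $V \subseteq W$ were inner models of $\ZFC$ with $(\aleph_{\omega+1})^V = (\aleph_{n+1})^W$, then the corresponding unnumbered lemma following Lemma~\ref{cc_lem} yields, in $W$, some $\eta < \omega_{n+1}$ and a strongly increasing sequence $\langle f_\alpha : \alpha < \omega_{n+1}\rangle$ of functions from $\omega$ to $\eta$, again contradicting clause~(1).

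Since all three parts are immediate reductions to results already developed earlier in the section, there is no substantive obstacle to the proof; the corollary is essentially a bookkeeping assembly of the pieces provided by Lemma~\ref{cc_lem}, its inner model analogue, and Theorem~\ref{goodthrm}. The only mild point to flag is the arithmetic identification $\lambda^{+3} = \omega_{n+1}$ with $\lambda$ uncountable regular, which requires precisely the hypothesis $n \geq 3$ and so explains why clause~(1) is stated only from $n = 3$ onward.
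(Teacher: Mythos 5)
Your proposal is correct and is exactly the argument the paper intends: the paper gives no explicit proof, simply noting that the corollary follows from the results of the section, and your reduction of clause (1) to Theorem \ref{goodthrm} with $\lambda = \omega_{n-2}$ (so that $\lambda^{+3} = \omega_{n+1}$) and of clauses (2) and (3) to Lemma \ref{cc_lem} and its inner-model analogue is the intended bookkeeping. The observation that $n \geq 3$ is precisely what makes $\omega_{n-2}$ uncountable is also the right point to flag.
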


It also follows that the only regular cardinals that can possibly be lengths of
strongly increasing sequences from $\breals$ are $\aleph_n$ for $0 \leq n \leq 3$.
We have seen that there are always such sequences of length $\aleph_0$ and $\aleph_1$.
We will prove, in Section~\ref{pmax_section}, the consistency of the existence of a
strongly increasing sequence of length $\aleph_2$. The question about the consistency
of the existence of a strongly increasing sequence of length $\aleph_3$ remains open.




\section{Consistency via a $\pmax$ variation} \label{pmax_section}

In this section we use a natural variation of Woodin's partial order $\pmax$ to force over a model of $\AD^{+} + \exists A \subseteq \bbR\, V =L(A,\bbR)$ to produce a model of $\ZFC$ with a very strongly increasing sequence in $\breals$ of length $\omega_{2}$. We refer the reader to \cite{PLextensions} for background on $\AD^{+}$. Since $L(\bbR) \models \AD \rightarrow \AD^{+}$ (by a result of Kechris), our result applies to models of $\AD$ of the form $L(\bbR)$. 
We refer the reader to \cite{Woodin} for background on $\pmax$, especially Chapter 4 and Section 9.2.
The article \cite{PLhandbook} may also be helpful. 




Conditions in our partial order $\bbP$ are triples $(M, F, a)$ such that
\begin{itemize}
\item $M$ is a countable transitive model of $\ZFC + \MA_{\aleph_{1}}$ which is iterable with respect to $\NS_{\omega_{1}}^{M}$;
\item $F$ is in $M$ a very strongly increasing sequence of successor length;
\item $a$ is an element of $\cP(\omega_{1})^{M}$ such that, for some $x \in \cP(\omega)^{M}$, $\omega_{1}^{M} = \omega_{1}^{L[a, x]}$.
\end{itemize}
The order is : $(M, F, a) < (N, H, b)$ if there exists in $M$ an iteration \[j \colon (N, \NS_{\omega_{1}}^{N}) \to (N^{*}, \NS_{\omega_{1}}^{N^{*}})\] such that
\begin{itemize}
\item $j(b) = a$;
\item $\NS_{\omega_{1}}^{N^{*}} = \NS_{\omega_{1}}^{M} \cap N^{*}$;
\item $j(H)$ is a proper initial segment of $F$.
\end{itemize}

The iterations referred to in the definition above are repeated generic elementary embeddings induced by forcing with $\cP(\omega_{1})/\NS_{\omega_{1}}$; a model is iterable if this process always results in wellfounded models (see Definition 3.5 of \cite{Woodin}). The map $j$ in the definition of the order embeds $N$ elementarily into $N^{*}$; $N^{*}$ need not be an elementary substructure of $M$. However, since $N^{*}$ and $M$ have the same $\omega_{1}$, and agree about stationarity for subsets of $\omega_{1}^{M}$ in $N^{*}$, $j(H)$ is a very strongly increasing sequence in $M$. 
The requirement above that $F$ properly extends $j(H)$ simplifies the arguments below (by removing trivial cases), and adds no new complications, by Lemma \ref{extendlem}. Note that $j$ will be an element of $H(\aleph_{2})^{M}$; moreover, the ordinal height of $N^{*}$ will be less than $\omega_{2}^{M}$. 

\begin{remark}\label{PPmaxrem} If $(M, F, a)$ is a condition in our partial order $\bbP$, then $\langle (M, \NS_{\omega_{1}}^{M}), a \rangle$ is a condition in $\pmax$. Conversely, if $\langle (M, \NS_{\omega_{1}}^{M}), a \rangle$ is a $\pmax$ condition, and $M \models ``F$ is a very strongly increasing sequence of successor length", then $(M, F, a)$ is a condition in $\bbP$. If $(M, F, a)$ and $(N, G, b)$ are conditions in $\bbP$ such that $\langle (M, \NS_{\omega_{1}}^{M}), a \rangle < \langle (N, \NS_{\omega_{1}}^{N}), b \rangle$ (as $\pmax$ conditions, as witnessed by an embedding $j$), then there is an $F'$ in $M$ such that $(M, F', a) < (N, G, b)$ (as conditions in $\bbP$); for instance $F'$ can be an extension of $j(G)$ formed by adding one additional member. Moreover, by Lemma \ref{extendlem}, $F'$ can be chosen in $M$ to have any desired length below $\omega_{2}^{M}$. Finally, the requirement that $M \models \MA_{\aleph_{1}}$ and the conditions on the set $a \in \cP(\omega_{1})^{M}$ imply that the order on each comparable pair of $\bbP$-conditions is witnessed by a unique iteration (this follows immediately from the corresponding fact for $\pmax$). 
\end{remark}

\begin{remark}\label{tooeasy} Much of the standard $\pmax$ machinery can be applied directly to the partial order $\bbP$. In particular, $\AD^{+}$ implies the following facts about $\bbP$, each of which can be derived from
Theorem 9.31 of \cite{Woodin} and Remark \ref{PPmaxrem} (the first follows from the second). The assertion that $(N, \NS_{\omega_{1}}^{N})$ is $A$-iterable means that $A \cap N \in N$, and, whenever $j \colon (N, \NS_{\omega_{1}}^{N}) \to (N^{*}, \NS_{\omega_{1}}^{*})$ is an iteration, $j(A \cap N) = A\cap N^{*}$. 
\begin{itemize}
\item For each set $x \in H(\aleph_{1})$, there exists a $\bbP$-condition $(M, F, a)$ with $x$ in $H(\aleph_{1})^{M}$.
\item For each $\bbP$ condition $(M, F, a)$ and each $A \subseteq \breals$ there is a $\bbP$-condition
$(N, F', b) < (M, F, a)$ such that
\begin{itemize}
\item $(N, \NS_{\omega_{1}}^{N})$ is $A$-iterable;
\item $\langle H(\aleph_{1})^{N}, \in , A \cap N \rangle \prec \langle H(\aleph_{1}), \in, A \rangle$;
\end{itemize}
\end{itemize}
\end{remark}

In conjunction with the previous two remarks, the proof of Theorem 4.43 of \cite{Woodin} gives the $\omega$-clousure of $\bbP$. We sketch the proof since the same situation appears again in the proof of Theorem \ref{newlem}. 

\begin{lem}\label{cclem} Every descending $\omega$-sequence in $\bbP$ has a lower bound.
\end{lem}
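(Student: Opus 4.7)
The plan is to adapt the proof of $\omega$-closure of $\pmax$ (Theorem 4.43 of \cite{Woodin}), with an additional step to construct the new component $F_\omega$ of the limit condition. Given a descending $\omega$-sequence $\langle (M_n, F_n, a_n) : n < \omega \rangle$ in $\bbP$, I would first invoke the standard $\pmax$-closure argument on the associated sequence of $\pmax$-conditions $\langle ((M_n, \NS_{\omega_1}^{M_n}), a_n) : n < \omega \rangle$, obtaining a countable transitive model $M_\omega \models \ZFC + \MA_{\aleph_1}$, an element $a_\omega \in \cP(\omega_1)^{M_\omega}$, and coherent iterations $k_n \colon M_n \to N_n^{*}$ in $M_\omega$ witnessing that $((M_\omega, \NS_{\omega_1}^{M_\omega}), a_\omega)$ is a $\pmax$-lower bound of the sequence. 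By the iteration-uniqueness noted in Remark~\ref{PPmaxrem} together with the standard coherence of this construction, $k_n(F_n)$ is a proper initial segment of $k_{n+1}(F_{n+1})$ in $M_\omega$ for each $n$.

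Working inside $M_\omega$, I would set $G = \bigcup_n k_n(F_n)$, a very strongly increasing sequence of limit length $\gamma$ with $\cof(\gamma) = \omega$. The task is to produce $F_\omega \in M_\omega$, a very strongly increasing sequence of successor length properly extending each $k_n(F_n)$; then $(M_\omega, F_\omega, a_\omega)$ will be the desired lower bound in $\bbP$, with the $k_n$ serving as order-witnesses. Any such $F_\omega$ is forced to extend $G$, so the natural candidate is $F_\omega = G \cup \{(\gamma, f_\gamma)\}$ for some upper bound $f_\gamma \in \omega^\omega \cap M_\omega$ of $G$; by Remark~\ref{ctworem}, this gives a very strongly increasing sequence of length $\gamma + 1$ whenever $\gamma$ is not of the form $\beta + \omega$.

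The principal obstacle is exactly the case $\gamma = \beta + \omega$, in which appending $f_\gamma$ at position $\gamma$ would require condition (\ref{vsitwo}) of Definition~\ref{vsidef} to hold at $\gamma$, constraining the functions $f_{\beta + k}$ (for $k < \omega$) already fixed by $G$. To circumvent this, I would arrange in advance that each $\xi_n$, where $F_n$ has length $\xi_n + 1$ in $M_n$, is a limit ordinal; then by elementarity $k_n(\xi_n)$ is a limit in $M_\omega$, and $\gamma = \sup_n k_n(\xi_n)$ is a limit of limits, so in particular not of the form $\beta + \omega$. Achieving this reduction within $\bbP$---by passing to a descending refinement of the original sequence whose conditions have $F$-components of length a successor of a limit ordinal, using the density of such conditions (a consequence of Lemma~\ref{extendlem} and Remark~\ref{PPmaxrem})---is the most delicate technical point of the argument, and must be carried out alongside, rather than strictly before, the $\pmax$-closure construction.
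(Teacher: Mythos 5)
Your overall architecture is the same as the paper's: run the $\pmax$ $\omega$-closure argument to obtain coherent iterations $k_n$ inside a limit model, observe that $G=\bigcup_n k_n(F_n)$ is a very strongly increasing sequence of countably cofinal limit length $\gamma$, and append a single dominating function via Remark \ref{ctworem}. You are also right to isolate the case $\gamma=\beta+\omega$ as the crux: the paper's sketch simply cites Remark \ref{ctworem}, which explicitly excludes that case, so you have put your finger on a real issue rather than invented one.

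However, your proposed repair---passing to a coinitial descending refinement whose $F$-components have length a successor of a limit ordinal---cannot work, and seeing why shows the obstacle is not located where you place it. By the uniqueness of witnessing iterations (Remark \ref{PPmaxrem}), any lower bound $(M_\omega,F_\omega,a_\omega)$ of the \emph{original} sequence must have each $k_n(F_n)$, hence their union $G$, as an initial segment of $F_\omega$, where $k_n$ is the unique iteration of $M_n$ sending $a_n$ to $a_\omega$; refining the sequence only lengthens $F_\omega$ beyond $G$, it does not change $G$ or remove this requirement. Now consider a descending sequence in which each $F_{n+1}$ extends the image of $F_n$ by exactly one new function $g_{n+1}$ with $g_{n+1}(0)\neq 0$. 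The order permits this: appending one strict $<^*$-upper bound to a very strongly increasing sequence of successor length always yields another one, because the new index is a successor and in particular not of the form $\beta+\omega$, so no clause of Definition \ref{vsidef} constrains the new function beyond domination (and Remark \ref{PPmaxrem} explicitly endorses one-step extensions). Since the $g_k$ are reals and hence fixed by every iteration, $G$ then has length $\beta+\omega$ and carries $g_1,g_2,\dots$ at positions $\beta+1,\beta+2,\dots$. Any $F_\omega$ of successor length end-extending $G$ has length greater than $\beta+\omega$, so condition (\ref{vsitwo}) of Definition \ref{vsidef} applies at $\beta+\omega$ and, with $n=1$, demands some $\alpha\in[\beta+1,\beta+\omega)$ with $f_\alpha(0)=0$; no such $\alpha$ exists. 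So for such a sequence no lower bound exists under the definitions as literally stated, and neither a different choice of iterations nor a refinement can help. A genuine fix must intervene before the limit stage---for instance by building the ``zero-padded'' witnesses of condition (\ref{vsitwo}) into the ordering on $\bbP$, or by weakening condition (\ref{vsitwo}) to the $=^*$-level granted by Remark \ref{ctworem2}---rather than by adjusting the lengths of the $F_n$ after the fact.
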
 

\begin{proof} (Sketch)
Suppose that $p_{i} = (M_{i}, F_{i}, a_{i})$ ($i \in \omega$) are the members of a descending $\omega$-sequence in $\bbP$. By the first part of Remark \ref{tooeasy} we can work inside a countable transitive model $N$ such that that $\langle p_{i} : i \in \omega \rangle \in H(\aleph_{1})^{N}$ and $(N, \NS_{\omega_{1}}^{N})$ is iterable. As in the proof of Theorem 4.43 of \cite{Woodin}, by combining the iterations witnessing the order on the $p_{i}$'s we get a sequence of $\bbP$-conditions 
$(\hat{M}_{i}, \hat{F}_{i}, \hat{a}_{i})$ $(i \in \omega)$ such that, for each $i \in \omega$,  
\begin{itemize}
\item $(\hat{M}_{i}, \NS_{\omega_{1}}^{\hat{M}_{i}})$ is an iterate of $(M_{i}, \NS_{\omega_{1}}^{M_{i}})$ and $\hat{F}_{i}$ and $\hat{a}_{i}$ are the corresponding images of $F_{i}$ and $a_{i}$ respectively, 
\item $\omega_{1}^{\hat{M}_{i}} = \omega_{1}^{\hat{M}_{0}}$, 
\item $\NS_{\omega_{1}}^{\hat{M}_{i}} = \NS_{\omega_{1}}^{\hat{M}_{i+1}} \cap \hat{M}_{i}$, 
\item $\hat{F}_{i}$ is a proper initial segment of $\hat{F}_{i+1}$ and 
\item $\hat{a}_{i} = \hat{a}_{0}$. 
\end{itemize} 
One can then iterate the sequence $\langle \hat{M}_{i} : i \in \omega \rangle$ (as in Defnition 4.15 of \cite{Woodin}) in $N$ to produce a sequence of iterations
\[j_{i} \colon (\hat{M}_{i}, \NS^{\hat{M}_{i}}_{\omega_{1}}) \to (M^{*}_{i}, \NS^{M^{*}_{i}}_{\omega_{1}})\hspace{.1in}(i \in \omega)\] such that 
\begin{itemize}
\item each $M^{*}_{i}$ is correct (in $N$) about stationary subsets of $\omega_{1}$;
\item $\bigcup_{i \in \omega}j_{i}(\hat{F}_{i})$ (which we will call $F$) is (in $N$) a very strongly increasing sequence in $\breals$ whose length is a limit ordinal of countable cofinality;  
\item each $j_{i}(\hat{a}_{i})$ is the same set (which we will call $a$). 
\end{itemize} 
There exists by Remark \ref{ctworem} a very strongly increasing sequence $F^{*}$ in $N$ which extends $F$ by the addition of one function. Then $(M, F^{*}, a)$ is a lower bound for $\langle p_{i} : i \in \omega \rangle$.  
\end{proof}



The remaining argument concerns descending $\omega_{1}$-sequences in $\bbP$, from the point of view of some countable transitive model. The main point of this argument is to prove the second item from Remark \ref{summaryrem} below: in the $\bbP$-extension, every subset of $\omega_{1}$ arises from an iteration of a model coming from a condition in the generic filter (the model that we are working in can be taken to satisfy the second part of Remark \ref{tooeasy} with respect to some set of reals $A$ coding a $\bbP$-name for a subset of $\omega_{1}$). Less importantly for us, the same construction is also used to show that the nonstationary ideal is saturated in the $\bbP$-extension.

As in Lemma 5.2 of \cite{PLhandbook}, we phrase the construction of our descending $\omega_{1}$-sequence in terms of a game, where player $I$ takes care of certain steps (typically, meeting each member of some $\aleph_{1}$-sized collection of dense sets) which are repeated without change in our context. 

Given a condition $(M, F, a)$, the $\omega_{1}$-\emph{sequence game} for $p = (M, F, a)$ is the game of length $\omega_{1}$ where players $I$ and $II$ pick the members of a descending $\omega_{1}$-sequence of conditions $p_{\alpha} = (M_{\alpha}, F_{\alpha}, a_{\alpha})$ $(\alpha < \omega_{1})$ from $\bbP$, where $p_{0} = p$, player $I$ picks $p_{\alpha}$ for all successor ordinals $\alpha$ and player $II$ picks $p_{\alpha}$ for all limit ordinals $\alpha$.
We have in addition that for each $\alpha < \omega_{1}$, letting $j_{\alpha, \alpha + 1}$ be the embedding of $M_{\alpha}$ into $M_{\alpha +1}$ witnessing that $p_{\alpha + 1} < p_{\alpha}$, $F_{\alpha + 1}$ properly extends $j_{\alpha, \alpha + 1}(F_{\alpha})$.
Such a sequence induces (via composing the embeddings $j_{\alpha, \alpha + 1}$) an elementary embedding $k_{\alpha}$ of each model $M_{\alpha}$ into a structure $M_{\alpha}^{*}$ of cardinality $\aleph_{1}$, and a sequence $F^{*} = \langle f_{\alpha}^{*} : \alpha < \gamma \rangle$ which is the union of the sequences $k_{\alpha}(F_{\alpha})$. Player $II$ wins a run of the game if and only if
\begin{itemize}
\item each $M_{\alpha}^{*}$ is correct about stationary subsets of $\omega_{1}$, and
\item there is a very strongly increasing sequence in $\breals$ of successor length extending $F^{*}$.
\end{itemize}
The last condition above is equivalent to the existence of a function $f \in \breals$ such that $\{ \alpha < \gamma : f_{\alpha} <_{0} f\}$ contains a club subset of $\gamma$ (which will have cofinality $\aleph_{1}$).

Lemma \ref{newlem} below is the only new ingredient in adapting the standard $\pmax$ machinery to give the desired consistency result. The lemma is an adaptation of Lemma 4.46 of \cite{Woodin}; the steps involving the sequences $\sigma$ and the use of Hechler and club-shooting forcing are the only new elements.
For those unfamiliar with $\pmax$, the most mysterious part of the proof may be the iteration of the sequence $\langle \hat{M}_{\beta, i} : i < \omega \rangle$. For details on this sort of construction (which was already sketched in the proof of Lemma \ref{cclem}) we refer the reader to Corollary 4.20 of \cite{Woodin}.
According to the standard arguments, the lemma is applied inside a countable transitive model $M$ which is correct about $\mathbb{P}$ (i.e., such that \[\langle H(\aleph_{1})^{M}, \in, A\rangle \prec \langle H(\aleph_{1}), \in, A\rangle\] for some set of reals $A$ coding $\bbP$, and typically a $\bbP$-name for a subset of $\omega_{1}$) and which contains a Woodin cardinal, as given for instance by Theorem 9.30 of \cite{Woodin}.
The forcing in the proof of Lemma \ref{newlem} (Hechler forcing followed by adding a club subset of $\omega_{1}$) is then followed by a two-stage forcing to produce a condition in $\bbP$ (a lower bound for the $\omega^{M}_{1}$-sequence constructed), first collapsing the Woodin cardinal to be $\omega_{2}$ to make the nonstationary ideal on $\omega_{1}$ precipitous, and then forcing with a c.c.c. forcing (which preserves precipitousness) to make $\MA_{\aleph_{1}}$ hold (see, for instance, the top of page 628 in \cite{Woodin}). 


We will use the following objects from the theory of cardinal characteristics of the continuum. The cardinal characteristic $\mathfrak{d}$ is defined to be the least cardinal $\kappa$ such that there is a set $A \subseteq \breals$ of cardinality $\kappa$ such that for every $f \in \breals$ there is a $g \in A$ such that $f <^{*} g$ (see \cite{ABhandbook}).  We let $(\bbH, \leq_{\bbH})$ denote Hechler forcing \cite{Hechler}, where
\begin{itemize}
\item $\bbH$ is the set of pairs $(s, P)$ such that $s \in \omega^{<\omega}$ and $P \in [\breals]^{<\aleph_{0}}$ and
\item $(s, P) \leq_{\bbH} (s', P')$ if $s$ extends $s'$, $P$ contains $P'$ and
$s(n) > f(n)$ for all $f \in P'$ and $n \in |s| \setminus |s'|$.
\end{itemize}
The partial order $\bbH$ adds an element of $\breals$ which is $<^{*}$-above each element of $\breals$ in the ground model.
Moreover, $\bbH$ is c.c.c. (in fact, $\sigma$-centered) as conditions with the same first coordinate are compatible, so it preserves stationary subsets of $\omega_{1}$. 
It follows that, for any cardinal $\kappa$,  $\MA_{\kappa}$ implies that
$\mathfrak{d} > \kappa$.

\begin{lem}\label{newlem} For each condition $p=(M, F, a)$ in $\bbP$, there is a strategy for player $II$ in the $\omega_{1}$-sequence game for $p$ such that each run of the game in $V$ according to this strategy is winning for player $II$ in some forcing extension which preserves the stationarity of each stationary subset of $\omega_{1}$ in each model $M^{*}_{\alpha}$ arising from the run of the game.
\end{lem}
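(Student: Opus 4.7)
The plan is to adapt the proof of Lemma 4.46 of \cite{Woodin}, which yields the analogous result for $\pmax$. Work throughout inside a countable transitive model $N$ correct about $\bbP$ and containing a Woodin cardinal, as provided by Remark \ref{tooeasy} and Theorem 9.30 of \cite{Woodin}. The standard $\pmax$ iteration-of-iterations construction (Corollary 4.20 of \cite{Woodin}, as sketched in the proof of Lemma \ref{cclem}) provides the skeleton of player II's strategy: at each of her turns she amalgamates the play so far into a coherent system whose iterates $M^*_\alpha$ share a common $\omega_1$ and agree on $\NS$, and whose iterates of the $F_\alpha$ align into a single very strongly increasing sequence. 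The one new ingredient is that she also chooses and codes an $\omega_1$-sequence $\langle \sigma_\alpha \in \breals : \alpha < \omega_1 \rangle$ of witness functions, to be used after the game together with Hechler forcing and a club-shooting to extend $F^*$.

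In detail, at each limit stage $\alpha < \omega_1$ of her turn, having amalgamated the play so far into the common countable very strongly increasing sequence $\hat{F}_\alpha$, player II picks $\sigma_\alpha \in \breals$ with $\sigma_\alpha(n) > g(n)$ for every $g \in \hat{F}_\alpha$ and every $n < \omega$ (possible because $\hat{F}_\alpha$ is countable), and plays $p_\alpha = (M_\alpha, F_\alpha, a_\alpha)$ as a lower bound for the sequence so far, with $\hat{F}_\alpha$, $\sigma_\alpha$, and $\langle \sigma_\xi : \xi < \alpha \rangle$ all belonging to $M_\alpha$ and with $F_\alpha$ a successor-length extension of $\hat{F}_\alpha$ (available by Remark \ref{PPmaxrem} and Lemma \ref{extendlem}).

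After the $\omega_1$-length run one obtains a very strongly increasing $F^* = \langle f^*_\delta : \delta < \gamma \rangle$ with $\cof(\gamma) = \omega_1$ and an $\omega_1$-sequence $\langle \sigma_\alpha : \alpha < \omega_1 \rangle$ such that $f^*_\delta <_0 \sigma_\alpha$ for every $\delta < \zeta_\alpha$, where $\zeta_\alpha$ is the length of $\hat{F}_\alpha$ and $\sup_\alpha \zeta_\alpha = \gamma$. Force with Hechler's $\bbH$: this is ccc, preserves all stationary subsets of $\omega_1$, and the generic $f \in \breals$ satisfies $\sigma_\alpha <^* f$, say $\sigma_\alpha <_{m_\alpha} f$, for every $\alpha < \omega_1$. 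Fix $m$ such that $A := \{\alpha < \omega_1 : m_\alpha = m\}$ is stationary; for any $\delta < \gamma$, picking $\alpha \in A$ with $\zeta_\alpha > \delta$ and composing $f^*_\delta <_0 \sigma_\alpha <_m f$ yields $f^*_\delta <_m f$. A pigeonhole on the countably valued map $\delta \mapsto f^*_\delta \restriction m \in \omega^m$ produces a stationary $S \subseteq \gamma$ and a tuple $(k_0, \dots, k_{m-1})$ with $f^*_\delta(j) = k_j$ for every $\delta \in S$ and $j < m$, and setting $f'(j) := k_j + 1$ for $j < m$ and $f'(j) := f(j)$ for $j \geq m$ gives $f^*_\delta <_0 f'$ for every $\delta \in S$.

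Finally, over the Hechler extension force with the standard club-shooting poset of closed bounded subsets of $T := \{\delta < \gamma : f^*_\delta <_0 f'\} \supseteq S$, with $\gamma$ identified with $\omega_1$ via a fixed cofinal continuous bijection; the generic adds a club $C \subseteq T$, and appending $f'$ at index $\gamma$ yields the required successor-length very strongly increasing extension of $F^*$. The main obstacle is verifying that the two-step forcing preserves the stationarity in $V$ of every subset of $\omega_1$ lying in any $M^*_\alpha$, which is exactly what is needed for the first clause of the winning condition. The Hechler step is harmless by ccc; the club-shooting step is the technical crux, and the preservation of $M^*_\alpha$-names for stationary sets is pushed through by adapting the iteration-of-iterations density argument in the proof of Lemma 4.46 of \cite{Woodin}, with the coherent placement of the $\sigma_\alpha$'s into the $M^*_\alpha$'s supplying the extra bookkeeping needed to make that adaptation go through.
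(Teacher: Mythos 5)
There is a genuine gap, and it sits exactly at the point you flag as ``the technical crux'' and defer. The winning condition requires that \emph{every} stationary $A \subseteq \omega_1^{M_\alpha}$ from \emph{every} model survive into the final extension, so the club-shooting poset for $T = \{\delta : f^*_\delta <_0 f'\}$ must preserve the stationarity of each $j_{\alpha,\omega_1}(A)$; for that you need $j_{\alpha,\omega_1}(A) \cap T$ to be stationary for each such $A$, not merely that $T$ contain one stationary set $S$. Your construction of $f'$ (pigeonholing $\delta \mapsto f^*_\delta \restriction m$ to a single constant tuple $(k_0,\dots,k_{m-1})$ and bumping $f$ below $m$) yields only one stationary $S \subseteq T$, and there is no reason $T$ meets every $j_{\alpha,\omega_1}(A)$ stationarily: some $A$ may concentrate on indices $\delta$ where $f^*_\delta(0)$ exceeds $k_0$, and the club shot through $T$ then kills $j_{\alpha,\omega_1}(A)$. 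Your dominating functions $\sigma_\alpha \in \breals$ placed into the models control only mod-finite domination and cannot repair this, since the obstruction lives entirely in the finitely many coordinates below $m$. Nor does the density argument of Lemma 4.46 of \cite{Woodin} address it; that argument concerns realizing iterations and has no club-shooting component.

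The missing idea is the paper's bookkeeping: before and during the game one fixes pairwise disjoint stationary sets $B_{\sigma,A} \subseteq \omega_1$ indexed by pairs $(\sigma, A)$ with $\sigma \in \omega^{<\omega}$ a \emph{finite} sequence and $A$ a stationary set from some $M_\alpha$, and at each relevant limit stage $\beta \in B_{\sigma,A}$ player II both catches $\beta$ into the image of $A$ (via the choice of the first generic filter in the iteration of $\langle \hat{M}_{\beta,i} : i<\omega\rangle$) and chooses the new top function $f^*_{\eta_\beta}$ to have $\sigma$ as an initial segment. Then, for any Hechler condition $(\sigma,P)$ and any club $C$, one finds a relevant $\beta \in C \cap B_{\sigma,A} \cap j_{\alpha,\omega_1}(A)$ and strengthens to $(\sigma, P\cup\{f^*_{\eta_\beta}\})$, which forces $g \geq_0 f^*_{\eta_\beta}$ outright because the stem $\sigma$ agrees with $f^*_{\eta_\beta}$ on its domain; this density argument shows $\{\beta \in j_{\alpha,\omega_1}(A) : g \geq_0 f^*_{\eta_\beta}\}$ is stationary for every $A$ simultaneously, which is precisely what the club-shooting needs. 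This interleaving of the finite-stem bookkeeping with the catch-up on stationary sets during the game is the new content of the lemma, and it is absent from your proposal.
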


\begin{proof}
A run of the game builds a descending $\omega_{1}$ sequence of $\bbP$ conditions
$p_{\alpha} = (M_{\alpha}, F_{\alpha}, a_{\alpha})$, with (stationary set preserving, but not elementary) embeddings
$j_{\alpha, \beta} \colon M_{\alpha} \to M_{\beta}$.
Let $\gamma_{\alpha}$ be such that
$F_{\alpha}$ has length $\gamma_{\alpha} + 1$ in $M_{\alpha}$. The definition of the order on $\bbP$ requires that $\gamma_{\beta} > j_{\alpha, \beta}(\gamma_{\alpha})$ for all
$\alpha < \beta$.

We show how to play for player $II$, i.e., how to choose $M_{\beta}$, $F_{\beta}$ and $a_{\beta}$ for each limit ordinal $\beta$, assuming that
$p_{\alpha}$ $(\alpha < \beta)$ have already been chosen.
Let us say that a countable limit ordinal $\beta$ is \emph{relevant} if $\beta$ is the supremum of $\{ \omega_{1}^{M_{\alpha}} : \alpha <\beta\}$. 
At the end of the game, the set of relevant ordinals will be a club subset of $\omega_{1}$. 
For limit ordinals $\beta$ which are not relevant, player $II$ can let $p_{\beta}$ be any lower bound for $\{ p_{\alpha} : \alpha < \beta\}$ (lower bounds exist by Lemma \ref{cclem}). 

As we carry out our construction, by some bookkeeping we associate to each pair
$(\sigma, A)$, where $\sigma \in \omega^{<\omega}$ and $A$ is, in some $M_{\alpha}$ a stationary subset of $\omega_{1}^{M_{\alpha}}$, a stationary
set $B_{\sigma, A} \subseteq \omega^{V}_{1}$, in such a way that the associated sets $B_{\sigma, A}$ are disjoint for distinct pairs $(\sigma, A)$.
We show how to play for $II$ in such a way that, for all relevant limit ordinals $\beta$, if $\beta$ is in $B_{\sigma, A}$, for some $A$ which is a stationary subset of $\omega_{1}^{M_{\alpha'}}$ for some $\alpha' < \beta$, then $\beta$ is in the induced image of $A$ (i.e., $j_{\alpha', \alpha}(A)$) for all $\alpha \in [\beta, \omega_{1}]$ (note that $\omega_{1}^{M_{\beta}}$ will be greater than $\beta$) and $\sigma$ is an initial segment of the first member of $F_{\beta}$ which is not in the images of the preceding $F_{\alpha}$'s (i.e., the member of $F_{\beta}$ indexed by the supremum of $\{j_{\alpha, \beta}(\gamma_{\alpha}) : \alpha < \beta\}$).

For each relevant limit ordinal $\beta$, in round $\beta$ the triples
$(M_{\alpha}, F_{\alpha}, a_{\alpha})$ $(\alpha < \beta)$ will have been chosen, along with the maps $j_{\alpha, \delta}$ ($\alpha \leq \delta < \beta$) witnessing that this is a descending sequence of conditions. Choosing a cofinal $\omega$-sequence $\langle \alpha_{i} : i \in \omega \rangle$ in $\beta$, and composing the maps $j_{\alpha_{i}, \alpha_{i+1}}$ $(i \in \omega)$, we get a
sequence $\langle \hat{M}_{\beta, i} : i < \omega \rangle$, where each $\hat{M}_{\beta, i}$ is the corresponding image of $M_{\alpha_{i}}$.
Let $\hat{\j}_{\alpha_{i}, \beta}$ be the induced embedding of $M_{\alpha_{i}}$ into $\hat{M}_{\beta, i}$, for each $i < \omega$. Then $\omega_{1}^{\hat{M}_{\beta, i}} = \beta$ for all $i < \omega$, and each $\hat{M}_{\beta, i}$ is a subset of the corresponding $\hat{M}_{\beta, i + 1}$, and correct in $\hat{M}_{\beta, i + 1}$ about stationary subsets of its $\omega_{1}$. Fix a countable transitive model $M_{\beta}$ of $\ZFC$ with $\langle \hat{M}_{\beta, i} : i < \omega \rangle$, 
$\langle p_{\alpha} : \alpha < \beta \rangle$ and $\{ \alpha_{i} : i \in \omega \}$ in $H(\aleph_{1})^{M_{\beta}}$, and such that $(M_{\beta}, \NS_{\omega_{1}}^{M_{\beta}})$ is iterable. 

Working in $M_{\beta}$, iterate $\langle \hat{M}_{\beta, i} : i < \omega \rangle$ (building an embedding $j^{*}_{\beta}$ of $\bigcup_{i \in \omega}\hat{M}_{\beta, i}$ into $M_{\beta}$) in such a way that
\begin{itemize}
\item  if $\beta$ is in $B_{\sigma, A}$, for some $\sigma \in \omega^{<\omega}$ and some $A$ which is a stationary subset of $\omega_{1}^{M_{\alpha'}}$ for some $\alpha' < \beta$, then the corresponding image of $A$ ($\hat{\j}_{\alpha_{i}, \beta}(j_{\alpha',\alpha_{i}}(A))$ for the least $i$ such that $\alpha_{i} \geq \alpha'$) is in the first filter in the iteration (this ensures that $\beta \in j^{*}_{\beta}(\hat{\j}_{\alpha_{i}, \beta}(j_{\alpha',\alpha_{i}}(A)))$);
\item each model $j^{*}_{\beta}(\hat{M}_{\beta, i})$ is correct in $M_{\beta}$ about stationary subsets of $\omega_{1}$.
\end{itemize}
Having constructed $j^{*}_{\beta}$, extend the union of the sets $j^{*}_{\beta}(\hat{\j}_{\alpha_{i},\beta}(F_{\alpha_{i}}))$ (the length of which has countable cofinality, since each $\gamma_{\alpha_{i+1}}$ is greater than $j_{\alpha_{i}, \alpha_{i+1}}(\gamma_{\alpha_{i}})$) with one element having $\sigma$ as an initial segment (which can be done by Remark \ref{ctworem}), and let $F_{\beta}$ be this extension. Let $a_{\beta}$ be the common value of $j^{*}_{\beta}(\hat{\j}_{\alpha_{i}, \beta}(a_{\alpha_{i}}))$ $(i < \omega)$. This completes the choice of $M_{\beta}$, $F_{\beta}$ and $a_{\beta}$.

Having constructed the entire run of the game, letting $a$ be the union of the sets $a_{\alpha}$ $(\alpha < \omega_{1})$, there is for each $\alpha < \omega_{1}$ a unique iteration $j_{\alpha, \omega_{1}}$ of $(M_{\alpha}, \NS_{\omega_{1}}^{M_{\alpha}})$ sending $a_{\alpha}$ to $a$. Let $\gamma^{*}_{\alpha}$ denote
the image of $\gamma_{\alpha}$ under this iteration, and let $F^{*}_{\alpha}$ be the corresponding 
image of $F_{\alpha}$. Noting that the sequences $F^{*}_{\alpha}$ extend one another, let $F^{*} = \bigcup_{\alpha < \omega_{1}}F^{*}_{\alpha}$ and let $\gamma^{*}$ be the length of $F^{*}$. 
For each countable limit ordinal $\beta$, let $\eta_{\beta} = \sup\{ \gamma^{*}_{\alpha} : \alpha < \beta\}$.
The set $\{\eta_{\beta} : \beta < \omega_{1} \}$ is closed below its supremum $\gamma$, which has cofinality $\omega_{1}$.

Force (over the model we have been working in, which we call $V$) with Hechler forcing to add a $g \in \breals$ dominating each element of $\omega^{\omega}\cap V$ mod-finite. This forcing is c.c.c., and therefore preserves stationary sets. 

We claim that, in $V[g]$, for each $A$ which is, for some $\alpha < \omega_{1}$, a stationary subset of $\omega_{1}^{M_{\alpha}}$ in $M_{\alpha}$, the set of $\beta \in j_{\alpha, \omega_{1}}(A)$ such that $g$ dominates $f^{*}_{\eta_{\beta}}$ everywhere is stationary. To see that this is the case, consider a Hechler condition
$(\sigma, P)$ and an $A$. In any club $C \subseteq \omega_{1}$, we can find a $\beta \in C \cap B_{\sigma, A}$ such that \[\sup\{\omega_{1}^{M_{\alpha}} : \alpha < \beta\} = \beta.\] Then $(\sigma, P \cup \{ f^{*}_{\eta_{\beta}}\}) \leq (\sigma, P)$. Since (by the construction above) $\sigma$ is an initial segment of $f^{*}_{\eta_{\beta}}$, $(\sigma, P \cup \{\beta\})$ forces that $g(n) \geq f^{*}_{\eta_{\beta}}(n)$ will hold for all $n \in \omega$ (i.e., $g \geq_{0} f^{*}_{\eta_{\beta}}$).

Finally, force to shoot a club $E$ (via the standard forcing with countable conditions) through the set of $\beta < \omega_{1}$ such that $g \geq_{0} f^{*}_{\eta_{\beta}}$. By the previous paragraph, this forcing preserves the stationarity of each set $j_{\alpha, \omega_{1}}(A)$. It follows that in $V[g][E]$, letting $g'(n) = g(n) + 1$ for all $n \in \omega$, $F^{*} \cup \{(\gamma^{*}, g')\}$ witnesses that the run of the $\omega_{1}$-sequence game just produced is winning for player $II$.
\end{proof}

\begin{remark}\label{summaryrem} Lemma \ref{newlem} and standard $\pmax$ arguments (see Theorems 9.32 and 9.34 of \cite{Woodin}) give the following.
\begin{enumerate}
\item If $G \subseteq \bbP$ is a filter, and $A_{G} = \bigcup\{ a : (M, F, a) \in G\}$, then for each $p=(M,F,a) \in G$ there is a unique
iteration $j_{p,G}$ of $(M, \NS_{\omega_{1}}^{M})$ sending $a$ to $A_{G}$.
\item If $W$ is an inner model of $\AD^{+}$ and $G \subseteq \bbP$ is $W$-generic, then every element of $\cP(\omega_{1})^{W[G]}$ is an element of
$j_{p, G}[\cP(\omega_{1})^{M}]$ for some  $p = (M, F, a)$ in $G$.
\item If $A \subseteq \bbR$ is such that $L(A, \bbR) \models \AD^{+}$, then forcing with $\bbP$ over $L(A, \bbR)$ does not add $\omega$-sequences from the ground model, preserves $\omega_{1}$ and $\omega_{2}$ as cardinals, and makes the cardinal $\Theta$ of the ground model the $\omega_{3}$ of the extension. 
\end{enumerate}
\end{remark} 

Remark \ref{summaryrem} and standard arguments give the main theorem of this section.

\begin{thrm}\label{mainthrmdetail} Suppose that $A \subseteq \bbR$ is such that $L(A, \bbR) \models \AD^{+}$, and $G \subseteq \bbP$ is $L(A, \bbR)$-generic. Then
$L(A, \bbR)[G]$ satisfies $\ZFC$ along with the following statements. 
\begin{enumerate}
\item $2^{\aleph_{0}} = \aleph_{2}$
\item $\exists B \subseteq \omega_{2}\, V = L[B]$,
\item $\NS_{\omega_{1}}$ is saturated, 
\item\label{goodseq} $\bigcup \{j_{p,G}(F) : p = (M, F, a) \in G\}$ is a very strongly increasing sequence of length $\omega_{2}$. 
\end{enumerate} 
\end{thrm}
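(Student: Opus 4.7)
The plan is to derive the theorem from Remark \ref{summaryrem} together with the standard $\pmax$-style arguments (cf.\ Sections 4 and 9.2 of \cite{Woodin}), focusing on the new content, namely clause (\ref{goodseq}). The facts that $L(A,\bbR)[G] \models \ZFC$ and clauses (1), (2), (3) hold are routine translations of the analogous facts for $\pmax$ itself, which transfer via Remark \ref{PPmaxrem}. Specifically, Remark \ref{summaryrem}(3) together with Lemma \ref{cclem} yield $\ZFC$ and clause (1): no reals are added, $\omega_{2}$ is preserved, and the counting of $\bbP$-conditions inside the extension gives $2^{\aleph_{0}} = \aleph_{2}$. Remark \ref{summaryrem}(2) allows one to code $\cP(\omega_{1})^{L(A,\bbR)[G]}$, the set $A$, and the filter $G$ itself into a single subset of $\omega_{2}$, giving (2). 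Clause (3) is the standard $\pmax$ saturation argument, applying Lemma \ref{newlem} to meet any putative $\omega_{1}$-sized family of positive sets witnessing nonsaturation.

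The new content is clause (\ref{goodseq}). Write $\bar{F} = \bigcup\{j_{p,G}(F) : p = (M,F,a) \in G\}$. I would first check that $\bar{F}$ is coherent: given $p = (M, F, a)$ and $q = (N, H, b)$ in $G$ with $q < p$, the definition of the order on $\bbP$ supplies an iteration $j \in N$ of $M$ such that $j(F)$ is a proper initial segment of $H$; by the uniqueness of iterations noted in Remark \ref{PPmaxrem}, composing $j$ with $j_{q, G}$ yields $j_{p, G}$, so $j_{p,G}(F)$ is a proper initial segment of $j_{q,G}(H)$. The $j_{p,G}(F)$'s therefore assemble into a single well-defined sequence.

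Next, I would verify that $\bar{F}$ is very strongly increasing. Any limit ordinal $\beta < \mathrm{lh}(\bar{F})$ satisfies $\beta < \mathrm{lh}(j_{p,G}(F))$ for some $p = (M, F, a) \in G$; since $F$ has successor length in $M$, $\beta$ is an index in the sequence $j_{p,G}(F)$. By elementarity of $j_{p,G}$ (from $M$ to $M^{*}_{p,G}$) and the correctness of $M^{*}_{p,G}$ about $\NS_{\omega_{1}}$, the club and natural-number witnesses for strong increasingness at $\beta$ inside $M^{*}_{p,G}$ transfer to $L(A,\bbR)[G]$; the conditions on $\beta + \omega$ and on limits of uncountable cofinality likewise transfer. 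For the length: each $M^{*}_{p,G}$ has cardinality $\aleph_{1}$ in the extension, so $\mathrm{lh}(\bar{F}) \leq \omega_{2}$; for the reverse inequality, standard density arguments combined with Lemma \ref{extendlem} show that for each $\delta < \omega_{2}$ there is $p \in G$ with $\mathrm{lh}(j_{p, G}(F)) > \delta$, giving $\mathrm{lh}(\bar{F}) = \omega_{2}$.

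The main obstacle I anticipate is not conceptual but notational: careful bookkeeping is required to track the initial segments $j_{p, G}(F)$ as $p$ varies across $G$, and to make sure that the limit cases in the definition of very strongly increasing are handled uniformly by appealing to the correct iterate $M^{*}_{p, G}$ each time. The key point is that, because conditions have successor length, every limit ordinal below $\mathrm{lh}(\bar{F})$ already occurs in the interior of some $j_{p, G}(F)$, so no genuinely new combinatorics are needed to handle the transitions between conditions in $G$.
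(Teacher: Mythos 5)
Your treatment of coherence and of the verification that the union is very strongly increasing matches the paper's: the order on $\bbP$ plus the uniqueness of iterations makes the $j_{p,G}(F)$'s an increasing chain of initial segments, and since each $F$ has successor length every limit ordinal of the union lies in the interior of some $j_{p,G}(F)$, so elementarity of $j_{p,G}$ together with the stationary-correctness of the iterates transfers the witnesses. The routine clauses are dispatched the same way in the paper.

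The one place where your argument has a real gap is the lower bound $\mathrm{lh}(\bar F)\ge\omega_2$, which you attribute to ``standard density arguments combined with Lemma \ref{extendlem}.'' A density argument does not obviously work here: for a fixed $\delta<\omega_2^V$, the would-be dense set $\{p : \mathrm{lh}(j_{p,G}(F))>\delta\}$ is not determined by $p$ alone, since the iteration $j_{p,G}$ (and hence the length of $j_{p,G}(F)$, an ordinal below $\omega_2^{M^*_{p,G}}$) depends on the entire filter below $p$; moreover $\delta$ is not named by any condition, as each $M$ is countable. The paper's proof supplies the missing mechanism: given $\gamma<\omega_2^V$, code $\gamma$ as the length of a prewellordering $\leq$ of $\omega_1^V$, use part (2) of Remark \ref{summaryrem} to find $p=(M,F,a)\in G$ and a prewellordering $\leq_0$ in $M$ with $j_{p,G}(\leq_0)=\leq$, then use genericity together with Lemma \ref{extendlem} to find $p'=(M',F',a')<p$ in $G$ (order witnessed by $j$) with $\mathrm{lh}(F')$ exceeding the length of $j(\leq_0)$; elementarity of $j_{p',G}$ then gives $\mathrm{lh}(j_{p',G}(F'))>\gamma$. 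So the cofinality of the lengths in $\omega_2$ is obtained by capturing subsets of $\omega_1$ of the extension inside conditions, not by meeting dense sets indexed by ordinals. You should replace the appeal to density with this argument (or an equivalent one); the rest of your proposal is in line with the paper.
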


\begin{proof}(Sketch)  We sketch the proof of part (\ref{goodseq}); the other parts follow from standard $\pmax$ arguments. For each condition $p = (M, F, a)$ in the generic filter $G$, 
the elementarity of $j_{p,G}$ gives that $j_{p,G}(F)$ is a very strongly increasing sequence. The order on $\bbP$ implies that the union of these sequences $j_{p,G}(F)$ is a very strongly increasing sequence of length at most $\omega_{2}^{V}$. To see that the length is in fact $\omega_{2}^{V}$, fix $\gamma < \omega_{2}^{V}$, a prewellording $\leq$ of $\omega_{1}^{V}$ of length $\gamma$ and, by part (2) of Remark \ref{summaryrem}, a condition $p=(M,F,a)\in G$ and a prewellordering $\leq_{0}$ of $\omega_{1}^{M}$ in $M$ such that $\leq = j_{p,G}(\leq_{0})$.  Lemma \ref{extendlem} and the genericity of $G$ imply that there exists a condition $p' = (M', F', a') < (M, F, a)$ in $G$ (with the order witnessed by an iteration $j$ of $(M, \NS_{\omega}^{M})$) such that the length of $F'$ is greater than the length of the wellordering $j(\leq_{0})$. The elementarity of $j_{p',G}$ then implies that $j_{p',G}(F')$ has length greater than that of $\leq = j_{p',G}(j(\leq_{0}))$. 
\end{proof} 

\begin{remark}\label{spitrem}
One could naturally try to reproduce the result proved here using an iterated forcing consisting alternately of Hechler forcing and adding club subsets of $\omega_{1}$. One issue with this approach is that if $F= \{ f_{\alpha} : \alpha < \omega_{1} \}\subseteq \breals$ is $\leq^{*}$-unbounded, and $g \in \breals$ is added by a c.c.c. forcing over $V$, then there are cofinally many $m \in \omega$ such that the set $\{ \alpha < \omega_{1} : f_{\alpha}(m) \geq g(m) \}$ is stationary (in $V$). If $F$ is the range of a cofinal subsequence of some strongly increasing sequence (whose index set is closed in the length of the sequence) then the stationarity of some if these sets will have to be destroyed into order to use $g$ to continue the sequence. The argument from Lemma \ref{newlem} avoids this issue by requiring preservation only of the sets $j_{\alpha, \omega_{1}}(A)$ coming from the models in the sequence being built. That is, it is not necessary to preserve the stationarity of every stationary subset of $\omega_{1}$ in $V$. The Semi-Properness Iteration Lemma on pages 485-486 of \cite{PIF} offers a parallel degree of freedom, as (using the notation there) for each successor $j$ the quotient $P_{j}/P_{i}$ is required to be semi-proper only for arbitrarily large nonlimit $i < j$. 
\end{remark}

We conclude with a couple of remaining open questions. We first remark that the 
$\pmax$ machinery is well-suited for constructing models 
with strongly increasing sequences of length $\omega_2$ but does not seem to be readily 
adaptable to construct models with longer strongly increasing sequences. Corollary \ref{no4cor} implies that 
there cannot exist strongly increasing sequences of length $\omega_4$ of functions from 
$\omega$ to $\omega$, but the corresponding question about $\omega_3$ remains open:

\begin{ques}
  Is it consistent with $\mathsf{ZFC}$ that there exists a strongly increasing sequence of 
  length $\omega_3$ consisting of functions from $\omega$ to $\omega$?
\end{ques}

Lastly, we forced over a model satisfying $\mathrm{AD} + V = L(\mathbb{R})$ to obtain 
our main consistency result. We do not know if these hypotheses are optimal, or even if 
the existence of a strongly increasing $\omega_2$-sequence of functions from $\omega$ to 
$\omega$ carries any large cardinal strength at all. The approach suggested by Remark \ref{spitrem} may answer the following question. 

\begin{ques}
  What is the consistency strength of ``$\mathsf{ZFC} + $ there exists a strongly 
  increasing $\omega_2$-sequence of functions from $\omega$ to $\omega$"?
\end{ques}

\bibliographystyle{plain}
\bibliography{CLHqbib}

\noindent Institute of Mathematics, CAS\\
\v{Z}itn\'{a} 25\\
115 67 Praha 1\\
Czechia\\
lambiehanson@math.cas.cz

\vspace{\baselineskip}

\noindent Department of Mathematics\\ Miami University\\ Oxford, Ohio\\ USA\\
larsonpb@miamioh.edu

\end{document}